\author{Hery Randriamaro\footnote{This research was supported by DAAD.}  \\  \footnotesize{Fachbereich Mathematik und Informatik}\\  \footnotesize{Philipps-Universit\"at Marburg}\\  \footnotesize{D-35032 Marburg} \\ \small{\texttt{herand@mathematik.uni-marburg.de}}}
\title{Diagonalization of the Matrices of the Multinomial Descent and Multinomial Inversion Statistics on the Symmetric Group}
\begin{document}

\maketitle

\begin{abstract}

\noindent In the work of Varchenko, Zagier, Thibon, and Reiner, Saliola, Welker, linear algebraic properties of the multiplication map on the group algebra of the group algebra element are studied, which is the sum over all permutations weighted by \begin{math}q^{\mathtt{inv}}\end{math}, \begin{math}q^{\mathtt{maj}}\end{math}, \begin{math}\mathtt{inv}\end{math}. Here \begin{math}q\end{math} is a variable, and \begin{math}\mathtt{inv}\end{math} and \begin{math}\mathtt{maj}\end{math} are the classical statistics inversion and major index. We define a multinomial descent statistic \begin{math}\mathtt{des_X}\end{math} and a multinomial inversion statistic \begin{math}\mathtt{inv_X}\end{math}. These new defined statistics are the multinomial expressions of the classical statistics descent \begin{math}\mathtt{des}\end{math} and inversion. We determine the spectrum and the multiplicity of each element of the spectrum of the analogously defined multiplication map on the group algebra for both \begin{math}\mathtt{des_X}\end{math} and \begin{math}\mathtt{inv_X}\end{math}. As corollaries we deduce the spectrum and the multiplicity of each element of the spectrum of the defined multiplication map on the group algebra for the statistics \begin{math}\mathtt{des}\end{math}, \begin{math}\mathtt{maj}\end{math} and \begin{math}\mathtt{inv}\end{math}.
\end{abstract}

\section{Introduction}
\label{sec:introduction}

\noindent As usual we write \begin{math}\mathcal{S}_n\end{math} for the set of permutations of \begin{math}[n]:=\{1,2, \dots , n\}\end{math}, \begin{math}n \geq 1\end{math}. We consider for a permutation statistic \begin{math}s:\mathcal{S}_n \rightarrow \mathbb{R}[X_1, \dots, X_k]\end{math} the element \begin{math}\mathsf{S}:=\sum_{\sigma \in \mathcal{S}_n}s(\sigma)\sigma\end{math} of the group algebra \begin{math}\mathbb{R}[X_1, \dots, X_k][\mathcal{S}_n]\end{math}. The element \begin{math}\mathsf{S}\end{math} operates on \begin{math}\mathbb{R}[X_1, \dots, X_k][\mathcal{S}_n]\end{math} by left-multiplication as a linear mapping. The matrix of this linear mapping relativ to the standard basis \begin{math}(\sigma)_{\sigma \in \mathcal{S}_n}\end{math} is the matrix \begin{math}\big(s(\sigma \tau^{-1})\big)_{\sigma, \tau \in \mathcal{S}_n}\end{math}.\\
\noindent We recall that for \begin{math}\sigma \in \mathcal{S}_n\end{math}:
\begin{itemize}
\item its descents set is \begin{math}\mathtt{DES}(\sigma):=\{k \in [n-1]\ |\ \sigma(k)>\sigma(k+1)\}\end{math}.
\item its inversions set is \begin{math}\mathtt{INV}(\sigma):=\{(i,j)\ |\ i<j,\,\sigma(i)>\sigma(j)\}\end{math}.
\end{itemize}
We explore the matrix for the statistics \begin{math}\mathtt{des_X}: \mathcal{S}_n \rightarrow \mathbb{R}[X_1, \dots, X_{n-1}]\end{math}: \begin{displaymath}\mathtt{des_X}(\sigma):= \sum_{i \in \mathtt{DES}(\sigma)} X_i\end{displaymath}
and the statistics \begin{math}\mathtt{inv_X}: \mathcal{S}_n \rightarrow \mathbb{R}[X_{1,2}, \dots, X_{n-1,n}]\end{math}: \begin{displaymath}\mathtt{inv_X}(\sigma):= \sum_{(i,j) \in \mathtt{INV}(\sigma)} X_{i,j}.\end{displaymath}
We remark that \begin{math}\sum_{\sigma \in \mathcal{S}_n}\mathtt{des_X}(\sigma)\sigma\end{math} is an element of the descent algebra of \begin{math}\mathbb{R}[X_1, \dots, X_{n-1}][\mathcal{S}_n]\end{math}.

\newtheorem{DMatrix}{Definition} 
\begin{DMatrix}
Let $n \geq 1$. We choose a fixed linear order of \begin{math}\mathcal{S}_n\end{math} and build the following matrices whose rows and columns are indexed with the elements of \begin{math}\mathcal{S}_n\end{math} with respect to this order:
\begin{displaymath}\mathfrak{D_n}:=\big(\mathtt{des_X}(\pi \tau^{-1})\big)_{\pi, \tau \in \mathcal{S}_n},\end{displaymath}
\begin{displaymath}\mathfrak{I_n}:=\big(\mathtt{inv_X}(\pi \tau^{-1})\big)_{\pi, \tau \in \mathcal{S}_n}.\end{displaymath}
\end{DMatrix}

\noindent For a square matrix \begin{math}\mathrm{A}\end{math} we write \begin{math}Sp(\mathrm{A})\end{math} for its spectrum and \begin{math}V_{\mathrm{A}}(a)\end{math} for the multiplicity of the eigenvalue \begin{math}a \in Sp(\mathrm{A})\end{math}.\\
We want to determine \begin{math}Sp(\mathfrak{D_n})\end{math} resp. \begin{math}Sp(\mathfrak{I_n})\end{math} and the multiplicity of the elements of \begin{math}Sp(\mathfrak{D_n})\end{math} resp. \begin{math}Sp(\mathfrak{I_n})\end{math}.\\
By linear algebraic calculating we get:
\begin{enumerate}
\item[\begin{math}(n=1)\end{math}] \begin{math}Sp(\mathfrak{D_1})=\{0\}\end{math} and \begin{math}V_{\mathfrak{D_1}}(0)=1\end{math}.
\item[\begin{math}(n=2)\end{math}] \begin{math}Sp(\mathfrak{D_2})=\{X_1, -X_1\}\end{math} and \begin{math}V_{\mathfrak{D_2}}(X_1)=1\end{math}, \begin{math}V_{\mathfrak{D_2}}(-X_1)=1\end{math}.
\end{enumerate}
For \begin{math}n \geq 3\end{math}, we prove the following theorem:

\newtheorem{Gr1}{Theorem}
\begin{Gr1}
Let \begin{math}n \geq 3\end{math}. Then \begin{math}\mathfrak{D_n}\end{math} is diagonalizable and:
$$Sp(\mathfrak{D_n})=\{\frac{n!}{2}\sum_{k=1}^{n-1}X_k,\ -(n-2)!\sum_{k=1}^{n-1}X_k,\ 0\}$$
with:
\begin{itemize}
\item \begin{math}V_{\mathfrak{D_n}}\big(\frac{n!}{2}\sum_{k=1}^{n-1}X_k\big)=1\end{math},
\item \begin{math}V_{\mathfrak{D_n}}\big(-(n-2)!\sum_{k=1}^{n-1}X_k\big)=\binom{n}{2}\end{math},
\item \begin{math}V_{\mathfrak{D_n}}(0)=n! -\binom{n}{2} -1\end{math}.
\end{itemize}  
\end{Gr1}

\noindent As a direct application of the Theorem 1, we have the two following corollaries. We recall that: 
\begin{displaymath}\mathtt{des}(\sigma):= \#\mathtt{DES}(\sigma)\ \text{is the number of the descents of}\ \sigma,\end{displaymath} 
\begin{displaymath}\mathtt{maj}(\sigma):=\sum_{i \in \mathtt{DES}(\sigma)}i\ \text{is the major index of}\ \sigma.\end{displaymath}

\newtheorem{GrKoro2}{Corollary}
\begin{GrKoro2}
Let \begin{math}n \geq 1\end{math} and \begin{math}\mathsf{D_n}:=\big(\mathtt{des}(\pi \tau^{-1})\big)_{\pi, \tau \in \mathcal{S}_n}\end{math}. Then \begin{math}\mathsf{D_n}\end{math} is diagonalizable and we have:
\begin{enumerate}
\item[\begin{math}(n=1)\end{math}] \begin{math}Sp(\mathsf{D_1})=\{0\}\end{math} and \begin{math}V_{\mathsf{D_1}}(0)=1\end{math}.
\item[\begin{math}(n=2)\end{math}] \begin{math}Sp(\mathsf{D_2})=\{1, -1\}\end{math} and \begin{math}V_{\mathsf{D_2}}(1)=1\end{math}, \begin{math}V_{\mathsf{D_2}}(-1)=1\end{math}.
\item[\begin{math}(n \geq 3)\end{math}] \begin{math}Sp(\mathsf{D_n})=\{\binom{n}{2}(n-1)!,\ 0,\ -(n-1)!\}\end{math} and
\begin{itemize}
\item \begin{math}V_{\mathsf{D_n}}(\binom{n}{2}(n-1)!)=1\end{math},
\item \begin{math}V_{\mathsf{D_n}}\big(-(n-1)!\big)=\binom{n}{2}\end{math},
\item \begin{math}V_{\mathsf{D_n}}(0)=n! -\binom{n}{2} -1\end{math}.
\end{itemize}  
\end{enumerate}
\end{GrKoro2}

\begin{proof} Set \begin{math}X_i=1\end{math} in Theorem 1.  \end{proof}

\noindent The following corollary can be seen as a non-exponential version of a result by Thibon (\cite{K}, Theorem 56) who studied the matrix with entries \begin{math}q^{\mathtt{maj}(\sigma \tau^{-1})}\end{math} with deep results from the theory of noncommutative symmetric functions (\cite{G 1995} and \cite{K 1997}). It would be interesting to understand the deeper connection between the two results.

\newtheorem{MGrKoro2}[GrKoro2]{Corollary}
\begin{MGrKoro2}
Let \begin{math}n \geq 1\end{math} and \begin{math}\mathsf{M_n}:=\big(\mathtt{maj}(\pi \tau^{-1})\big)_{\pi, \tau \in \mathcal{S}_n}\end{math}. Then \begin{math}\mathsf{M_n}\end{math} diagonalizable and we have:
\begin{enumerate}
\item[\begin{math}(n=1)\end{math}] \begin{math}Sp(\mathsf{M_1})=\{0\}\end{math} and \begin{math}V_{\mathsf{M_1}}(0)=1\end{math}.
\item[\begin{math}(n=2)\end{math}] \begin{math}Sp(\mathsf{M_2})=\{1, -1\}\end{math} and \begin{math}V_{\mathsf{M_2}}(1)=1\end{math}, \begin{math}V_{\mathsf{M_2}}(-1)=1\end{math}.
\item[\begin{math}(n \geq 3)\end{math}] \begin{math}Sp(\mathsf{M_n})=\{\binom{n}{2}\frac{n!}{2},\ 0,\ -\frac{n!}{2}\}\end{math} and
\begin{itemize}
\item \begin{math}V_{\mathsf{M_n}}(\binom{n}{2}\frac{n!}{2})=1\end{math},
\item \begin{math}V_{\mathsf{M_n}}(-\frac{n!}{2})=\binom{n}{2}\end{math},
\item \begin{math}V_{\mathsf{M_n}}(0)=n! -\binom{n}{2} -1\end{math}.
\end{itemize}  
\end{enumerate}
\end{MGrKoro2}

\begin{proof} Set \begin{math}X_i=i\end{math} in Theorem 1.  \end{proof}

\bigskip

\noindent By linear algebraic calculating we get:
\begin{enumerate}
\item[\begin{math}(n=1)\end{math}] \begin{math}Sp(\mathfrak{I_1})=\{0\}\end{math} and \begin{math}V_{\mathfrak{I_1}}(0)=1\end{math}.
\item[\begin{math}(n=2)\end{math}] \begin{math}Sp(\mathfrak{I_2})=\{X_{1,2}, -X_{1,2}\}\end{math} and \begin{math}V_{\mathfrak{I_2}}(X_{1,2})=1\end{math}, \begin{math}V_{\mathfrak{I_2}}(-X_{1,2})=1\end{math}.
\item[\begin{math}(n=3)\end{math}] \begin{math}Sp(\mathfrak{I_3})=\{3X_{1,2}+3X_{1,3}+3X_{2,3},\ -X_{1,2}-2X_{1,3}-X_{2,3},\ -X_{1,2}+X_{1,3}-X_{2,3},\ 0 \}\end{math} and
\begin{itemize}
\item \begin{math}V_{\mathfrak{I_3}}(3X_{1,2}+3X_{1,3}+3X_{2,3})=1\end{math},
\item \begin{math}V_{\mathfrak{I_3}}(-X_{1,2}-2X_{1,3}-X_{2,3})=2\end{math},
\item \begin{math}V_{\mathfrak{I_3}}(-X_{1,2}+X_{1,3}-X_{2,3})=1\end{math},
\item \begin{math}V_{\mathfrak{I_3}}(0)=2\end{math}.
\end{itemize}  
\end{enumerate}
For \begin{math}n \geq 4\end{math}, we prove the following theorem:

\newtheorem{Gr2}[Gr1]{Theorem}
\begin{Gr2}
Let \begin{math}n \geq 4\end{math}. Then \begin{math}\mathfrak{I_n}\end{math} is diagonalizable and:
\begin{displaymath}Sp(\mathfrak{I_n})=\{\frac{n!}{2}\sum_{\{(i,j) \in [n]^2\ |\ i<j\}}X_{i,j},\ -(n-2)!\sum_{\{(i,j) \in [n]^2\ |\ i<j\}}(j-i)X_{i,j},\end{displaymath} \begin{displaymath}-(n-3)!\sum_{\{(i,j) \in [n]^2\ |\ i<j\}} \Big(n-2(j-i)\Big) X_{i,j},\ 0\}\end{displaymath} 
with
\begin{itemize}
\item \begin{math}V_{\mathfrak{I_n}}(\frac{n!}{2}\sum_{\{(i,j) \in [n]^2\ |\ i<j\}}X_{i,j})=1\end{math},
\item \begin{math}V_{\mathfrak{I_n}}\big(-(n-2)!\sum_{\{(i,j) \in [n]^2\ |\ i<j\}}(j-i)X_{i,j}\big)=n-1\end{math},
\item \begin{math}V_{\mathfrak{I_n}}\big(-(n-3)!\sum_{\{(i,j) \in [n]^2\ |\ i<j\}} \Big(n-2(j-i)\Big) X_{i,j}\big)=\binom{n-1}{2}\end{math},
\item \begin{math}V_{\mathfrak{I_n}}(0)=n! -\binom{n}{2} -n\end{math}.
\end{itemize}  
\end{Gr2}

\noindent We note that an exponential version of \begin{math}\mathfrak{I_n}\end{math} has been studied by Varchenko \cite{V 1993}. He was able to give a beautiful formula for its determinant. This exponential version has attracted considerable interest in various areas of mathematics (e.g.\cite{DH}, \cite{Z 1992}). As a direct application of the Theorem 2, we have the following corollary. This result was obtained in a recent independent work of Renteln (\cite{R 2011}, Section 4.8). The integrality assertion of this corollary is a very special case of a theorem of Reiner, Saliola, Welker (\cite{RSW}, Theorem 1.4) which also inspired our investigation. We recall that: 
\begin{displaymath}\mathtt{inv}(\sigma):= \#\mathtt{INV}(\sigma)\ \text{is the number of inversions of}\ \sigma.\end{displaymath} 

\newtheorem{GrKoro3}[GrKoro2]{Corollary}
\begin{GrKoro3}
Let \begin{math}n \geq 1\end{math} and \begin{math}\mathsf{I_n}:=\big(\mathtt{inv}(\pi \tau^{-1})\big)_{\pi, \tau \in \mathcal{S}_n}\end{math}. Then \begin{math}\mathsf{I_n}\end{math} is diagonalizable and we have:
\begin{enumerate}
\item[\begin{math}(n=1)\end{math}] \begin{math}Sp(\mathsf{I_1})=\{0\}\end{math} and \begin{math}V_{\mathsf{I_1}}(0)=1\end{math}.
\item[\begin{math}(n=2)\end{math}] \begin{math}Sp(\mathsf{I_2})=\{1, -1\}\end{math} and \begin{math}V_{\mathsf{I_2}}(1)=1\end{math}, \begin{math}V_{\mathsf{I_2}}(-1)=1\end{math}.
\item[\begin{math}(n=3)\end{math}] \begin{math}Sp(\mathfrak{I_3})=\{9,\ -4,\ -1,\ 0 \}\end{math} and
\begin{itemize}
\item \begin{math}V_{\mathfrak{I_3}}(9)=1\end{math},
\item \begin{math}V_{\mathfrak{I_3}}(-4)=2\end{math},
\item \begin{math}V_{\mathfrak{I_3}}(-1)=1\end{math},
\item \begin{math}V_{\mathfrak{I_3}}(0)=2\end{math}.
\end{itemize}  
\item[\begin{math}(n \geq 4)\end{math}] \begin{displaymath}Sp(\mathsf{I_n})=\{\frac{n!}{2}\binom{n}{2},\ -\frac{(n+1)!}{6},\ -\frac{n!}{6},\ 0\}\end{displaymath} and
\begin{itemize}
\item \begin{math}V_{\mathsf{I_n}}(\frac{n!}{2}\binom{n}{2})=1\end{math},
\item \begin{math}V_{\mathsf{I_n}}(\frac{(n+1)!}{6})=n-1\end{math},
\item \begin{math}V_{\mathsf{I_n}}(\frac{n!}{6})=\binom{n}{2}\end{math},
\item \begin{math}V_{\mathsf{I_n}}(0)=n! -\binom{n}{2} -n\end{math}.
\end{itemize}  
\end{enumerate}
\end{GrKoro3}

\begin{proof} Set \begin{math}X_{i,j}=1\end{math} in Theorem 2.  \end{proof}

\section{Multinomial Version of the Theorem of Perron-Frobenius}
\label{sec:perron}

\noindent For a polynomial \begin{math}P\end{math} and a monomial \begin{math}M\end{math} in \begin{math}\mathbb{R}[X_1, \dots, X_k]\end{math}, we write \begin{math}[M]P\end{math} for the coefficient of \begin{math}M\end{math} in \begin{math}P\end{math}. For a square matrix \begin{math}\mathsf{A}\end{math} we write \begin{math}E_{\mathsf{A}}(a)\end{math} for the eigenspace of the eigenvalue \begin{math}a \in Sp(\mathsf{A})\end{math} and for a vector \begin{math}\mathsf{v}\end{math} we write \begin{math}<\mathsf{v}>\end{math} for the subspace generated by \begin{math}\mathsf{v}\end{math}.

\newtheorem{XMNFijI}{Proposition}[section]
\begin{XMNFijI}
Let \begin{math}n \geq 2\end{math} and \begin{math}\mathfrak{P_n}= \big(P_{i,j}\big)_{i,j \in [n]}\end{math} be a matrix of polynomials \begin{math}P_{i,j} \in \mathbb{R}[X_1, \dots, X_k]\end{math} such that:
\begin{itemize}
\item[(a)] \begin{math}P_{i,j} \neq 0\end{math} and \begin{math}[X_1^{i_1} \dots X_k^{i_k}]P_{i,j} \geq 0\end{math},
\item[(b)] there is a polynomial \begin{math}P_n \in \mathbb{R}[X_1, \dots, X_k]\end{math} such that, for any \begin{math}i', i'' \in [n]\end{math},  \begin{displaymath}\sum_{j=1}^n P_{i', j}= \sum_{j=1}^n P_{i'', j}= P_n.\end{displaymath}
Then \begin{math}P_n \in Sp(\mathfrak{P_n})\end{math} and \begin{math}E_{\mathfrak{P_n}}(P_n)=<\left( \begin{array}{c} 1 \\ \vdots \\ 1 \end{array} \right)>\end{math}.
\end{itemize} 
\end{XMNFijI}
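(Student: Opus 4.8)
The plan is to prove the two assertions separately: that $P_n$ is an eigenvalue with the all-ones vector as eigenvector, and that this vector already exhausts the eigenspace.

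First I would check the easy inclusion. Write $\mathbf{1}=(1,\dots,1)^{\top}$. By hypothesis (b), for every row index $i$ one has $(\mathfrak{P_n}\mathbf{1})_i=\sum_{j=1}^n P_{i,j}=P_n$, so $\mathfrak{P_n}\mathbf{1}=P_n\mathbf{1}$. Since $P_n$ is a sum of nonzero polynomials with non-negative coefficients it is nonzero, and this already gives $P_n\in Sp(\mathfrak{P_n})$ together with $\langle\mathbf{1}\rangle\subseteq E_{\mathfrak{P_n}}(P_n)$.

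For the reverse inclusion I would regard $\mathfrak{P_n}$ as a matrix over the fraction field $K=\mathbb{R}(X_1,\dots,X_k)$ and aim to show $\dim_K E_{\mathfrak{P_n}}(P_n)=1$. The bridge to the classical theorem is specialization at a point $x=(x_1,\dots,x_k)$ of the open positive orthant $(0,\infty)^k$. By hypothesis (a) each $P_{i,j}$ is a nonzero polynomial with non-negative coefficients, hence $P_{i,j}(x)>0$; thus the specialized matrix $\mathfrak{P_n}(x)$ is a strictly positive real matrix, and by (b) its rows all sum to $P_n(x)>0$. Now I invoke Perron--Frobenius for positive matrices: the spectral radius is a simple eigenvalue, and it is the unique eigenvalue admitting a positive eigenvector. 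Since $\mathbf{1}>0$ is an eigenvector of $\mathfrak{P_n}(x)$ for the eigenvalue $P_n(x)$, this forces $P_n(x)$ to be that Perron eigenvalue, so its eigenspace is exactly $\langle\mathbf{1}\rangle$.

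Finally I would transfer this pointwise conclusion back to $K$. Given any $\mathbf{v}\in E_{\mathfrak{P_n}}(P_n)$ over $K$, after clearing denominators I may assume its entries $v_1,\dots,v_n$ are polynomials, and $\mathbf{v}$ remains an eigenvector. For each $x\in(0,\infty)^k$ the vector $\mathbf{v}(x)$ lies in the eigenspace $E_{\mathfrak{P_n}(x)}(P_n(x))=\langle\mathbf{1}\rangle$, so $v_1(x)=\dots=v_n(x)$. As a polynomial vanishing on the nonempty open set $(0,\infty)^k$ is identically zero, the differences $v_i-v_1$ vanish identically, whence $v_1=\dots=v_n$ in $\mathbb{R}[X_1,\dots,X_k]$ and $\mathbf{v}\in\langle\mathbf{1}\rangle$. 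Combined with the first step this gives $E_{\mathfrak{P_n}}(P_n)=\langle\mathbf{1}\rangle$. I expect the delicate points to be exactly the two hinges of this argument: extracting strict positivity of $\mathfrak{P_n}(x)$ from (a), which genuinely uses both $P_{i,j}\neq 0$ and the sign condition evaluated on the open orthant; and the density step that upgrades the pointwise equalities to identities of polynomials. The classical Perron--Frobenius input is standard, so the real content of this multinomial version lies in arranging the specialization so that (a) and (b) translate precisely into a strictly positive matrix with constant row sums at every positive point.
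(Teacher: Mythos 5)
Your proof is correct, but it takes a genuinely different route from the paper's. The paper argues by induction on $n$: it writes out the eigenvalue system $\mathfrak{P_n}\mathsf{x}=P_n\mathsf{x}$, uses the last equation to express $(P_n-P_{n,n})x_n$ in terms of $x_1,\dots,x_{n-1}$, substitutes this into the first $n-1$ equations after scaling them by $P_n-P_{n,n}$, and verifies that the reduced $(n-1)\times(n-1)$ system again satisfies hypotheses (a) and (b) with the new common row sum $P_{n-1}=P_n(P_n-P_{n,n})$; the induction hypothesis then gives $x_1=\dots=x_{n-1}$, and back-substitution gives $x_1=x_n$. You instead reduce to the classical Perron--Frobenius theorem by specializing at a point of the open positive orthant, where (a) yields a strictly positive real matrix and (b) yields constant row sums, and then transfer the pointwise conclusion back to the polynomial ring by noting that a polynomial vanishing on a nonempty open subset of $\mathbb{R}^k$ vanishes identically. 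Both hinges of your argument are sound: a nonzero polynomial with non-negative coefficients is strictly positive on $(0,\infty)^k$, and the density step is legitimate. What your approach buys is brevity and an explicit link to the classical theorem that the section title advertises but the paper never actually invokes; it also sidesteps the bookkeeping of verifying that the reduced system inherits (a) and (b). What the paper's approach buys is a self-contained, purely algebraic elimination argument that never leaves the polynomial ring and needs no topological input. One small point worth making explicit in your write-up: the eigenspace in the statement should be understood over the fraction field $\mathbb{R}(X_1,\dots,X_k)$ (as you do when clearing denominators), which matches how the paper's solution of the linear system is to be read.
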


\begin{proof} We prove the assertion by induction on \begin{math}n\end{math}. With simple calculations, we get the result for \begin{math}n=2\end{math}.\\
Now we assume that the assertion is proven for \begin{math}n-1 \geq 2\end{math}.\\
Let us consider the following system of n equations:
\begin{equation*}\left. \begin{array}{ccccccccccc}
P_{1,1}x_1 & + & P_{1,2}x_2 & + & \dots & + & P_{1,n-1}x_{n-1} & + & P_{1,n}x_n &  = & P_n x_1\\
P_{2,1}x_1 & + & P_{2,2}x_2 & + & \dots & + & P_{2,n-1}x_{n-1} & + & P_{2,n}x_n & = & P_n x_2\\
\vdots && \vdots &&&& \vdots && \vdots && \vdots \\
P_{n-1,1}x_1 & + & P_{n-1,2}x_2 & + & \dots & + & P_{n-1,n-1}x_{n-1} & + & P_{n-1,n}x_n & = & P_n x_{n-1}\\
P_{n,1}x_1 & + & P_{n,2}x_2 & + & \dots & + & P_{n,n-1}x_{n-1} & + & P_{n,n}x_n & = & P_n x_n
\end{array} \right. \end{equation*}
On the last row, we have \begin{math}P_{n,1}x_1+ P_{n,2}x_2+ \dots + P_{n,n-1}x_{n-1}= (P_n- P_{n,n}) x_n\end{math}. By multiplying the n-1 first rows with \begin{math}(P_n- P_{n,n})\end{math}, and after by replacing \begin{math}(P_n- P_{n,n}) x_n\end{math} with \begin{math}P_{n,1}x_1+ P_{n,2}x_2+ \dots + P_{n,n-1}x_{n-1}\end{math} in the n-1 first rows, we obtain the following system of n-1 equations:
\begin{equation*}\left. \begin{array}{ccccccccc}
P_{1,1}(P_n- P_{n,n})x_1 & + & \dots & + & P_{1,n-1}(P_n- P_{n,n})x_{n-1} & + & P_{1,n}Q & = & P_{n-1} x_1\\
P_{2,1}(P_n- P_{n,n})x_1 & + & \dots & + & P_{2,n-1}(P_n- P_{n,n})x_{n-1} & + & P_{2,n}Q & = & P_{n-1} x_2\\
\vdots  &&&& \vdots && \vdots && \vdots \\
P_{n-1,1}(P_n- P_{n,n})x_1 & + & \dots & + & P_{n-1,n-1}(P_n- P_{n,n})x_{n-1} & + & P_{n-1,n}Q & = & P_{n-1} x_{n-1}
\end{array} \right. \end{equation*}
where \begin{math}P_{n-1}= P_n(P_n- P_{n,n})\end{math} and \begin{math}Q= P_{n,1}x_1+ P_{n,2}x_2+ \dots + P_{n,n-1}x_{n-1}\end{math}.\\
We have, for \begin{math}i \in [n-1]\end{math}:
\begin{itemize}
\item 
\begin{displaymath}\sum_{j=1}^{n-1}[x_j]\big(P_{i,1}(P_n- P_{n,n})x_1+ \dots + P_{i,n-1}(P_n- P_{n,n})x_{n-1} + P_{i,n}Q\big)\end{displaymath}
\begin{displaymath}= \sum_{j=1}^{n-1}P_{i,j}(P_n- P_{n,n}) + \sum_{j=1}^{n-1}P_{i,n}P_{n,j}\end{displaymath}
\begin{displaymath}= (P_n- P_{i,n})(P_n- P_{n,n}) + P_{i,n}(P_n- P_{n,n})\end{displaymath}
\begin{displaymath}= P_{n-1}.\end{displaymath}
\item Let \begin{math}j \in [n-1]\end{math} and set \begin{displaymath}P'_{i,j}=[x_j]\big(P_{i,1}(P_n- P_{n,n})x_1+ \dots + P_{i\,n-1}(P_n- P_{n,n})x_{n-1} + P_{i,n}Q\big).\end{displaymath} By (a) and the definition of \begin{math}\mathfrak{P_n}\end{math}, we have that \begin{math}0 \neq P_{i,j}(P_n- P_{n,n})\end{math} is a polynomial with non-negative coefficients only. Again by (a), it follows that:
\begin{itemize}
\item[\begin{math}\triangleright\end{math}] \begin{math}P'_{i,j}= P_{i,j}(P_n- P_{n,n}) + P_{i,n}P_{n,j} \neq 0\end{math},
\item[\begin{math}\triangleright\end{math}] \begin{math}(P'_{i,j}\,,\,X_1^{i_1} \dots X_k^{i_k}) \geq 0\end{math}, for \begin{math}i_1, \dots, i_k \in \mathbb{N}\end{math}.
\end{itemize}
\end{itemize}
By induction, the solution of the system (2) is \begin{math}x_1 = x_2 = \dots = x_{n-1}\end{math}. By replacing \begin{math}x_2, x_3, \dots, x_{n-1}\end{math} by \begin{math}x_1\end{math} in \begin{math}P_{n,1}x_1+ P_{n,2}x_2+ \dots + P_{n,n-1}x_{n-1}= (P_n- P_{n,n}) x_n\end{math}, we have then \begin{math}x_1 = x_n\end{math}. Hence the solution of this system (1) is \begin{math}x_1 = x_2 = \dots = x_n\end{math}.\\
Therefore, \begin{math}P_n \in Sp(\mathfrak{P_n})\end{math} and \begin{math}E_{\mathfrak{P_n}}(P_n)=<\left( \begin{array}{c} 1 \\ \vdots \\ 1 \end{array} \right)>\end{math}. 
\end{proof}

\newtheorem{echt}[XMNFijI]{Corollary}
\begin{echt}
Let \begin{math}n \geq 2\end{math} and \begin{math}\mathfrak{P_n}= \big(P_{i,j}\big)_{i,j \in [n]}\end{math} be a matrix of polynomials \begin{math}P_{i,j} \in \mathbb{R}[X_1, \dots, X_k]\end{math} such that:
\begin{itemize}
\item[(a)] \begin{math}P_{i,j} \neq 0\end{math} and \begin{math}[X_1^{i_1} \dots X_k^{i_k}]P_{i,j} \geq 0\end{math}, for \begin{math}i \neq j\end{math}, 
\item[(b)] \begin{math}P_{i,i} = 0\end{math}, for \begin{math}i \in [n]\end{math}, 
\item[(c)] there is a polynomial \begin{math}P_n \in \mathbb{R}[X_1, \dots, X_k]\end{math} such that, for any \begin{math}i', i'' \in [n]\end{math}, \begin{displaymath}\sum_{j=1}^n P_{i', j}= \sum_{j=1}^n P_{i'', j}= P_n.\end{displaymath}
Then \begin{math}P_n \in Sp(\mathfrak{P_n})\end{math} and \begin{math}E_{\mathfrak{P_n}}(P_n)=<\left( \begin{array}{c} 1 \\ \vdots \\ 1 \end{array} \right)>\end{math}.
\end{itemize} 
\end{echt}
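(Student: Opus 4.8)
The plan is to reduce the Corollary to Proposition 2.1 by perturbing the diagonal so that every entry becomes a nonzero polynomial with nonnegative coefficients. The only obstruction to invoking the Proposition directly is hypothesis (b), which forces $P_{i,i} = 0$ and thereby violates the Proposition's requirement that \emph{all} entries (diagonal included) be nonzero. I would repair this by adding the monomial $X_1$ along the diagonal, apply the Proposition to the perturbed matrix, and then undo the perturbation.

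Concretely, first I would set $\mathfrak{Q_n} := \mathfrak{P_n} + X_1 \mathrm{I}_n$, where $\mathrm{I}_n$ denotes the identity matrix. Its off-diagonal entries coincide with those of $\mathfrak{P_n}$, hence are nonzero with nonnegative coefficients by hypothesis (a); its diagonal entries are $P_{i,i} + X_1 = X_1$ by hypothesis (b), which is likewise nonzero with nonnegative coefficients. Moreover each row sum of $\mathfrak{Q_n}$ equals $\sum_{j=1}^n P_{i,j} + X_1 = P_n + X_1$, independent of $i$ by hypothesis (c). Thus $\mathfrak{Q_n}$ satisfies all hypotheses of Proposition 2.1 with constant row sum $P_n + X_1$, and the Proposition yields $P_n + X_1 \in Sp(\mathfrak{Q_n})$ together with $E_{\mathfrak{Q_n}}(P_n + X_1) = \langle (1, \dots, 1)^T \rangle$.

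Finally I would translate this statement back to $\mathfrak{P_n}$. The key observation is that a scalar diagonal shift merely translates the spectrum while preserving eigenspaces, because the subtracted matrices are literally equal:
\[
\mathfrak{P_n} - P_n \mathrm{I}_n = \mathfrak{Q_n} - (P_n + X_1)\mathrm{I}_n .
\]
Consequently the two matrices have the same kernel over the fraction field $\mathbb{R}(X_1, \dots, X_k)$, and one is singular exactly when the other is. Since the right-hand side is singular with kernel $\langle (1, \dots, 1)^T \rangle$ by the previous step, the same holds for $\mathfrak{P_n} - P_n \mathrm{I}_n$, giving $P_n \in Sp(\mathfrak{P_n})$ and $E_{\mathfrak{P_n}}(P_n) = \langle (1, \dots, 1)^T \rangle$.

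There is essentially no hard step in this argument: the entire content is the bookkeeping that the diagonal shift preserves hypotheses (a)--(c) in the form demanded by Proposition 2.1, and the observation that the eigenspace is unchanged, which is immediate from the displayed identity. The only point requiring a moment's care is the choice of perturbation, namely that it be a fixed nonzero monomial with nonnegative coefficients so that the Proposition genuinely applies; $X_1$ serves this purpose, but any such monomial (or the constant $1$) would do equally well.
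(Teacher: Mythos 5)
Your proof is correct, and it takes a genuinely different route from the paper. The paper handles the zero diagonal by performing one step of elimination: it uses the last equation to express $(P_n - P_{n,n})x_n = P_n x_n$ in terms of $x_1,\dots,x_{n-1}$, substitutes into the first $n-1$ equations, checks that the resulting $(n-1)\times(n-1)$ system has nonzero entries with nonnegative coefficients and constant row sum $P_n^2$, and then invokes Proposition 2.1 on that smaller system. You instead stay at size $n$ and perturb the diagonal, setting $\mathfrak{Q_n} = \mathfrak{P_n} + X_1\mathrm{I}_n$, applying the Proposition to $\mathfrak{Q_n}$ with row sum $P_n + X_1$, and observing that $\mathfrak{P_n} - P_n\mathrm{I}_n$ and $\mathfrak{Q_n} - (P_n+X_1)\mathrm{I}_n$ are literally the same matrix, so the eigenspaces coincide. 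Your argument is shorter and avoids the row-sum verification for the reduced system (which in the paper is the only computational content of the proof); it also isolates exactly what hypothesis (b) is for, namely that the shifted diagonal entries become nonzero with nonnegative coefficients. The one point worth stating explicitly, which you do implicitly, is that $E_{\mathfrak{Q_n}}(P_n+X_1)=\ker\bigl(\mathfrak{Q_n}-(P_n+X_1)\mathrm{I}_n\bigr)$ being one-dimensional and nonzero is precisely the assertion needed for both the membership $P_n\in Sp(\mathfrak{P_n})$ and the identification of $E_{\mathfrak{P_n}}(P_n)$; no further argument is required.
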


\begin{proof} The proof is obvious for \begin{math}n=2\end{math}.\\ Let us consider \begin{math}n>2\end{math}. We have the following system of n equations:
\begin{equation*}\left. \begin{array}{ccccccccccc}
0 & + & P_{1,2}x_2 & + & \dots & + & P_{1,n-1}x_{n-1} & + & P_{1,n}x_n & = & P_n x_1\\
P_{2,1}x_1 & + & 0 & + & \dots & + & P_{2,n-1}x_{n-1} & + & P_{2,n}x_n & = & P_n x_2\\
\vdots  && \vdots &&&& \vdots && \vdots && \vdots \\
P_{n-1,1}x_1 & + & P_{n-1,2}x_2 & + & \dots & + & 0 & + & P_{n-1,n}x_n & = & P_n x_{n-1}\\
P_{n,1}x_1 & + & P_{n,2}x_2 & + & \dots & + & P_{n,n-1}x_{n-1} & + & 0 & = & P_n x_n
\end{array} \right. \end{equation*}
On the last row, we have \begin{math}P_{n,1}x_1+ P_{n,2}x_2+ \dots + P_{n,n-1}x_{n-1}= P_n x_n\end{math}. By multiplying the n-1 first rows with \begin{math}P_n\end{math}, and after by replacing \begin{math}P_n x_n\end{math} with \begin{math}P_{n,1}x_1+ P_{n,2}x_2+ \dots + P_{n,n-1}x_{n-1}\end{math} in the n-1 first rows, we obtain the following system of n-1 equations:
\begin{equation*}\left. \begin{array}{ccccccc}
P_{1,n} P_{n,1}x_1 & + & \dots & + & (P_n P_{1,n-1} + P_{1,n}P_{n,n-1})x_{n-1} & = & P_n^2 x_1\\
(P_n P_{2,1} + P_{2,n}P_{n,1})x_1 & + & \dots & + & (P_n P_{2,n-1} + P_{2,n}P_{n,n-1})x_{n-1} & = & P_n^2 x_2\\
\vdots &&&& \vdots && \vdots \\
(P_n P_{n-1,1} + P_{n-1,n}P_{n,1})x_1 & + & \dots & + & P_{n-1,n} P_{n,n-1}x_{n-1} & = & P_n^2 x_{n-1}
\end{array} \right. \end{equation*}
where 
\begin{displaymath}\sum_{j=1}^{n-1}[x_j]\big( \sum_{k=1}^{n-1} [x_k](P_n P_{i,k} + P_{i,n}P_{n,k}) \big)=  P_n \sum_{k=1}^{n-1}P_{i,k} + P_{i,n}\sum_{k=1}^{n-1}P_{n,k}= P_n \sum_{k=1}^{n-1}P_{i,k} + P_n P_{i,n}= P_n^2.\end{displaymath}
Using Proposition 2.1 on the obtained system, we get \begin{math}x_1 = x_2 = \dots = x_{n-1}\end{math} and then the desided result.
\end{proof}

\noindent Proposition 2.1 can also be applied to get more general results. As example, for a matrix of polynomials \begin{math}\mathfrak{P_n}= \big(P_{i,j}\big)_{i,j \in [n]}\end{math} such that:
\begin{itemize}
\item[(a)] \begin{math}[X_1^{i_1} \dots X_k^{i_k}]P_{i,j} \geq 0\end{math},
\item[(b)] \begin{math}P_{i,n} \neq 0\end{math} and \begin{math}P_{n,i} \neq 0\end{math}, for \begin{math}i \neq n\end{math},
\item[(c)] there is a polynomial \begin{math}P_n \in \mathbb{R}[X_1, \dots, X_k]\end{math} such that, for any \begin{math}i', i'' \in [n]\end{math}, \begin{displaymath}\sum_{j=1}^n P_{i', j}= \sum_{j=1}^n P_{i'', j}= P_n.\end{displaymath}
\end{itemize}
Then \begin{math}P_n \in Sp(\mathfrak{P_n})\end{math} and \begin{math}E_{\mathfrak{P_n}}(P_n)=<\left( \begin{array}{c} 1 \\ \vdots \\ 1 \end{array} \right)>\end{math}.

\noindent Now we apply Corollary 2.2 to the statistics \begin{math}\mathtt{des_X}\end{math} and \begin{math}\mathtt{inv_X}\end{math}.

\newtheorem{SatzEr}[XMNFijI]{Lemma}
\begin{SatzEr}
For \begin{math}n \geq 3\end{math}, we have \begin{displaymath}\sum_{\sigma \in \mathcal{S}_n} \mathtt{des_X}(\sigma) = \frac{n!}{2}\sum_{k=1}^{n-1}X_k.\end{displaymath}
\end{SatzEr}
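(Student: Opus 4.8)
The plan is to expand the definition of $\mathtt{des_X}$ and interchange the two sums. First I would write
\begin{displaymath}
\sum_{\sigma \in \mathcal{S}_n} \mathtt{des_X}(\sigma) = \sum_{\sigma \in \mathcal{S}_n} \sum_{i \in \mathtt{DES}(\sigma)} X_i = \sum_{k=1}^{n-1} \#\{\sigma \in \mathcal{S}_n \mid k \in \mathtt{DES}(\sigma)\}\, X_k ,
\end{displaymath}
so that the whole computation reduces to counting, for each fixed position $k \in [n-1]$, how many permutations have a descent there, i.e.\ satisfy $\sigma(k) > \sigma(k+1)$. The coefficient of $X_k$ is precisely this count.

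Next I would show that this count equals $\frac{n!}{2}$ for every $k$, by a symmetry argument. For a fixed $k$, consider the map $\varphi$ on $\mathcal{S}_n$ that swaps the two values in positions $k$ and $k+1$: that is, $\varphi(\sigma)(k) = \sigma(k+1)$, $\varphi(\sigma)(k+1) = \sigma(k)$, and $\varphi(\sigma)(j) = \sigma(j)$ for $j \neq k, k+1$. This $\varphi$ is an involution, hence a bijection of $\mathcal{S}_n$, and it exchanges the condition $\sigma(k) > \sigma(k+1)$ with $\sigma(k) < \sigma(k+1)$. Since $\sigma(k) = \sigma(k+1)$ is impossible, every permutation falls into exactly one of these two classes, so $\varphi$ pairs them off bijectively and each class has cardinality $\frac{n!}{2}$.

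Substituting the count $\frac{n!}{2}$ back for every $k$ gives
\begin{displaymath}
\sum_{\sigma \in \mathcal{S}_n} \mathtt{des_X}(\sigma) = \frac{n!}{2}\sum_{k=1}^{n-1}X_k ,
\end{displaymath}
which is the claimed identity. Since the argument is a clean swap of summation followed by an involution count, there is no real obstacle here; the only point requiring a moment of care is verifying that $\varphi$ genuinely toggles only the descent status at position $k$ (and not at the neighbouring positions $k-1$ or $k+1$), but this is immaterial because we count position $k$ in isolation and $\varphi$ is evaluated independently for each fixed $k$. The hypothesis $n \geq 3$ is not essential for this lemma and merely matches the regime of Theorem~1.
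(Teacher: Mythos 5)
Your proof is correct and is essentially the same as the paper's: both reduce the claim to counting, for each fixed $k$, the permutations with a descent at position $k$, and both establish the count $\frac{n!}{2}$ via the involution swapping the values in positions $k$ and $k+1$. Your side remark that $n\geq 3$ is not needed is also accurate; the hypothesis merely matches the regime of Theorem~1.
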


\begin{proof} For \begin{math}k \in [n-1]\end{math}, \begin{math}[X_k]\big(\sum_{\sigma \in \mathcal{S}_n} \mathtt{des_X}(\sigma)\big)= \#\{\sigma \in \mathcal{S}_n\,|\,k \in \mathtt{DES}(\sigma)\}\end{math}. It is clear that the following mapping is bijective: 
\begin{displaymath}\gamma: \left\{ 
\begin{array}{ccc}
\{\sigma \in \mathcal{S}_n\,|\,k \in \mathtt{DES}(\sigma)\} & \rightarrow & \{\sigma \in \mathcal{S}_n\,|\,k \notin \mathtt{DES}(\sigma)\}\\
\tau= \left( \begin{array}{cccc} \dots & k & k+1 & \dots  \\ \  \dots & \tau(k) & \tau(k+1) & \dots   \end{array} \right)
& \mapsto & 
\gamma(\tau) = \left( \begin{array}{cccc}  \dots & k & k+1 & \dots   \\   \dots & \tau(k+1) & \tau(k) & \dots  \end{array} \right)
\end{array}
\right.,\end{displaymath}
that means \begin{math}\#\{\sigma \in \mathcal{S}_n\,|\,k \in \mathtt{DES}(\sigma)\}= \#\{\sigma \in \mathcal{S}_n\,|\,k \notin \mathtt{DES}(\sigma)\}\end{math}.\\
Since \begin{math}\#\{\sigma \in \mathcal{S}_n\,|\,k \in \mathtt{DES}(\sigma)\}\ +\ \#\{\sigma \in \mathcal{S}_n\,|\,k \notin \mathtt{DES}(\sigma)\}= \#\mathcal{S}_n\end{math}, then \begin{math}[X_j]\big(\sum_{\sigma \in \mathcal{S}_n} \mathtt{des_X}(\sigma)\big)= \frac{n!}{2}\end{math}, and \begin{math}\sum_{\sigma \in \mathcal{S}_n} \mathtt{des_X}(\sigma) = \frac{n!}{2}\sum_{k=1}^{n-1}X_k.\end{math} 
\end{proof}

\noindent From Corollary 2.2 and Lemma 2.3, we deduce that \begin{equation}\frac{n!}{2}\sum_{k=1}^{n-1}X_k \in Sp(\mathfrak{D_n})\ \text{and}\ E_{\mathfrak{D_n}}(\frac{n!}{2}\sum_{k=1}^{n-1}X_k)=<\left( \begin{array}{c} 1 \\ \vdots \\ 1 \end{array} \right)>.\label{Dn}\end{equation}
Besides, from Lemma 2.3, we also directly get \begin{displaymath}\sum_{\sigma \in \mathcal{S}_n} \mathtt{des}(\sigma) = \frac{(n-1)n!}{2}\ \text{and}\ \sum_{\sigma \in \mathcal{S}_n} \mathtt{maj}(\sigma) = \frac{n!}{2}\binom{n}{2}\end{displaymath}
that are differently calculated in other books (\cite{S 1997}, Example 2.2.5 resp. Corollary 4.5.9 for example).\\

\newtheorem{SatzEr2}[XMNFijI]{Lemma}
\begin{SatzEr2}
For \begin{math}n \geq 4\end{math}, we have \begin{displaymath}\sum_{\sigma \in \mathcal{S}_n} \mathtt{inv_X}(\sigma) = \frac{n!}{2}\sum_{\{(i,j) \in [n]^2\ |\ i<j\}}X_{i,j}.\end{displaymath}
\end{SatzEr2}

\begin{proof} For \begin{math}i,j \in [n]\end{math}, \begin{math}i<j\end{math}, \begin{math}[X_{i,j}]\big(\sum_{\sigma \in \mathcal{S}_n} \mathtt{inv_X}(\sigma)\big)= \#\{\sigma \in \mathcal{S}_n\,|\,(i,j) \in \mathtt{INV}(\sigma)\}\end{math}. It is clear that the following mapping is bijective: 
\begin{displaymath}\varphi: \left\{ 
\begin{array}{ccc}
\{\sigma \in \mathcal{S}_n\,|\,(i,j) \in \mathtt{INV}(\sigma)\} & \rightarrow & \{\sigma \in \mathcal{S}_n\,|\,(i,j) \notin \mathtt{INV}(\sigma)\}\\
\tau= \left( \begin{array}{ccccc}  \dots & i & \dots & j & \dots  \\ \ \dots & \tau(i) & \dots & \tau(j) & \dots  \end{array} \right)
& \mapsto & 
\varphi(\tau) = \left( \begin{array}{ccccc}  \dots & i & \dots & j & \dots  \\  \dots & \tau(j) & \dots & \tau(i) & \dots \end{array} \right)
\end{array}
\right.,\end{displaymath}
that means \begin{math}\#\{\sigma \in \mathcal{S}_n\,|\,(i,j) \in \mathtt{INV}(\sigma)\}= \#\{\sigma \in \mathcal{S}_n\,|\,(i,j) \notin \mathtt{INV}(\sigma)\}\end{math}.\\
Since \begin{math}\#\{\sigma \in \mathcal{S}_n\,|\,(i,j) \in \mathtt{INV}(\sigma)\}\ +\ \#\{\sigma \in \mathcal{S}_n\,|\,(i,j) \notin \mathtt{INV}(\sigma)\}= \#\mathcal{S}_n\end{math}, then \begin{math}[X_{i,j}]\big(\sum_{\sigma \in \mathcal{S}_n} \mathtt{inv_X}(\sigma)\big)= \frac{n!}{2}\end{math}, and \begin{math}\sum_{\sigma \in \mathcal{S}_n} \mathtt{inv_X}(\sigma) = \frac{n!}{2}\sum_{\{(i,j) \in [n]^2\ |\ i<j\}}X_{i,j}.\end{math}  
\end{proof}

\noindent From Corollary 2.2 and Lemma 2.4, we deduce that \begin{equation}\frac{n!}{2}\sum_{\{(i,j) \in [n]^2\ |\ i<j\}}X_{i,j} \in Sp(\mathfrak{I_n})\ \text{and}\ E_{\mathfrak{I_n}}(\frac{n!}{2}\sum_{\{(i,j) \in [n]^2\ |\ i<j\}}X_{i,j})=<\left( \begin{array}{c} 1 \\ \vdots \\ 1 \end{array} \right)>. \label{In}\end{equation}
Besides, from Lemma 2.4, we also directly get \begin{displaymath}\sum_{\sigma \in \mathcal{S}_n} \mathtt{inv}(\sigma) = \frac{n!}{2}\binom{n}{2}\end{displaymath}
that is differently calculated in other books (\cite{S 1997}, Corollary 1.3.10 for example).\\

\section{Minimal Polynomial of the Multinomial Descent Statistic}
\label{sec:descent}

\noindent To determine the minimal polynomial of \begin{math}\mathfrak{D_n}\end{math} we need the both following lemmas:

\noindent Let \begin{math}n \geq 3\end{math} and \begin{math}i,j \in [n]\end{math}. We write
\begin{displaymath}\mathcal{S}_n^{i,j}:= \{ \sigma \in \mathcal{S}_n\ |\ \sigma^{-1}(j)-\sigma^{-1}(i)=1 \},\end{displaymath}
\begin{displaymath}\mathcal{S}_n^{i\,-\,j}:= \{ \sigma \in \mathcal{S}_n\ |\ \sigma^{-1}(j)-\sigma^{-1}(i)>1 \}.\end{displaymath} 

\noindent Let \begin{math}n \geq 3\end{math}. We write \begin{displaymath}\mathfrak{d_n}= \sum_{k=1}^{n-1}X_k.\end{displaymath}

\newtheorem{super}[XMNFijI]{Lemma}
\begin{super}
Let \begin{math}n \geq 3\end{math} and \begin{math}i,j \in [n]\end{math}. Then
\begin{itemize}
\item if \begin{math}j>i+1\end{math}: \begin{displaymath}\sum_{\sigma \in \mathcal{S}_n^{i,j}} X_{\sigma^{-1}(i)} \mathtt{des_X}(\sigma^{-1})= \sum_{\sigma \in \mathcal{S}_n^{j,i}} X_{\sigma^{-1}(j)} \mathtt{des_X}(\sigma^{-1}).\end{displaymath}
\item if \begin{math}j=i+1\end{math}: \begin{displaymath}\sum_{\sigma \in \mathcal{S}_n^{i,(i+1)}} X_{\sigma^{-1}(i)} \mathtt{des_X}(\sigma^{-1})= \sum_{\sigma \in \mathcal{S}_n^{(i+1),i}} X_{\sigma^{-1}(i+1)} \mathtt{des_X}(\sigma^{-1})\ -X_i (n-2)!\mathfrak{d_n}.\end{displaymath}
\end{itemize}
\end{super}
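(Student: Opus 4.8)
The plan is to pass to inverse permutations, where the sums become sums over permutations constrained by the relative order of two prescribed entries, and then to exhibit an explicit transposition bijection whose effect on the descent set I can control exactly. First I would substitute $\tau=\sigma^{-1}$. Since $\sigma\mapsto\sigma^{-1}$ is a bijection of $\mathcal{S}_n$, and since $X_{\sigma^{-1}(i)}=X_{\tau(i)}$ while $\mathtt{des_X}(\sigma^{-1})=\mathtt{des_X}(\tau)$, the defining condition of $\mathcal{S}_n^{i,j}$ turns into $\tau(j)=\tau(i)+1$ and that of $\mathcal{S}_n^{j,i}$ into $\tau(i)=\tau(j)+1$. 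Thus the left-hand sum reads $\sum_{\tau:\,\tau(j)=\tau(i)+1}X_{\tau(i)}\,\mathtt{des_X}(\tau)$ and the right-hand sum reads $\sum_{\tau:\,\tau(i)=\tau(j)+1}X_{\tau(j)}\,\mathtt{des_X}(\tau)$.

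Then I would introduce the involution $\varphi$ of $\mathcal{S}_n$ that swaps the entries in positions $i$ and $j$. If $\tau(i)=a$ and $\tau(j)=a+1$, then $\varphi(\tau)$ carries $a+1$ in position $i$ and $a$ in position $j$, so $\varphi$ restricts to a bijection from $\{\tau:\tau(j)=\tau(i)+1\}$ onto $\{\tau:\tau(i)=\tau(j)+1\}$, and the weights match since $X_{\tau(i)}=X_a=X_{\varphi(\tau)(j)}$. The crux is to track the descent set under $\varphi$. The decisive point is that the two swapped values $a$ and $a+1$ are consecutive integers: for any value $v\notin\{a,a+1\}$ one has $v>a\iff v>a+1$ and $a>v\iff a+1>v$, so a descent comparison involving a swapped entry can change only when the neighbouring entry is itself $a$ or $a+1$. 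Those two values occupy exactly positions $i$ and $j$, so such a change forces $i$ and $j$ to be adjacent; the boundary cases $i=1$ or $j=n$ only remove positions that are in any event shown not to change.

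Consequently, when $j>i+1$ no descent changes at all, giving $\mathtt{des_X}(\varphi(\tau))=\mathtt{des_X}(\tau)$, so the two sums agree term by term and the first identity follows. When $j=i+1$ the only affected comparison is the pair $(i,i+1)$, which switches from an ascent ($a<a+1$) to a descent ($a+1>a$), while every other descent is preserved; hence $\mathtt{des_X}(\varphi(\tau))=\mathtt{des_X}(\tau)+X_i$. Transporting the right-hand sum through $\varphi$ then yields
$$\sum_{\tau:\,\tau(i)=\tau(j)+1}X_{\tau(j)}\,\mathtt{des_X}(\tau)=\sum_{\tau:\,\tau(i+1)=\tau(i)+1}X_{\tau(i)}\big(\mathtt{des_X}(\tau)+X_i\big),$$
so the difference of the two sides equals $X_i\sum_{\tau:\,\tau(i+1)=\tau(i)+1}X_{\tau(i)}$.

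Finally I would evaluate this leftover sum by a direct count: for each value $a\in[n-1]$ there are exactly $(n-2)!$ permutations with $\tau(i)=a$ and $\tau(i+1)=a+1$, so the sum equals $(n-2)!\sum_{a=1}^{n-1}X_a=(n-2)!\,\mathfrak{d_n}$, which is precisely the claimed correction term, completing the $j=i+1$ case. I expect the descent-set bookkeeping in the transposition step to be the only genuine obstacle, and it is handled cleanly by the consecutiveness of $a$ and $a+1$; the inverse substitution and the final count are routine.
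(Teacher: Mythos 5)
Your proposal is correct and follows essentially the same route as the paper: after passing to $\tau=\sigma^{-1}$, your involution $\varphi$ (swapping the entries in positions $i$ and $j$) is exactly the paper's map $\kappa_{i,j}$ applied to $\sigma^{-1}$, the descent bookkeeping rests on the same observation that the swapped values are consecutive, and the correction term is evaluated by the same $(n-2)!$ count. No substantive differences.
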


\begin{proof} Let us consider the following bijective mapping: 
\begin{displaymath}\kappa_{i,j}: \left\{ 
\begin{array}{ccc}
\mathcal{S}_n & \rightarrow & \mathcal{S}_n\\
\sigma= \left( \begin{array}{ccccc}  \dots & i & \dots & j & \dots  \\ \ \dots & \sigma(i) & \dots & \sigma(j) & \dots  \end{array} \right)
& \mapsto & 
\kappa_{i,j}(\sigma) = \left( \begin{array}{ccccc}  \dots & i & \dots & j & \dots  \\  \dots & \sigma(j) & \dots & \sigma(i) & \dots \end{array} \right)
\end{array}.
\right.\end{displaymath}

\begin{itemize}
\item Let \begin{math}\sigma \in \mathcal{S}_n^{i\,j}\end{math}. Then we have the following simple facts:
\begin{itemize}
\item[\begin{math}\triangleright\end{math}] \begin{math}\sigma^{-1}(j)= \sigma^{-1}(i)+1\end{math},
\item[\begin{math}\triangleright\end{math}] \begin{math}\sigma^{-1} \in \mathcal{S}_n^{\sigma^{-1}(i)\,-\,\sigma^{-1}(j)}\end{math},
\item[\begin{math}\triangleright\end{math}] \begin{math}\kappa_{i,j}(\sigma^{-1}) \in \mathcal{S}_n^{\sigma^{-1}(j)\,-\,\sigma^{-1}(i)}\end{math},
\item[\begin{math}\triangleright\end{math}] \begin{math}\mathtt{des_X}(\sigma^{-1})= \mathtt{des_X}\big(\kappa_{i,j}(\sigma^{-1})\big)\end{math},
\item[\begin{math}\triangleright\end{math}] \begin{math}\big(\kappa_{i,j}(\sigma^{-1})\big)^{-1} \in \mathcal{S}_n^{j,i}\end{math}.
\end{itemize}
Thus
\begin{displaymath}\sum_{\sigma \in \mathcal{S}_n^{i,j}} X_{\sigma^{-1}(i)} \mathtt{des_X}(\sigma^{-1})= \sum_{\sigma \in \mathcal{S}_n^{i,j}} X_{\sigma^{-1}(i)} \mathtt{des_X}\big(\kappa_{i,j}(\sigma^{-1})\big)= \sum_{\sigma \in \mathcal{S}_n^{j,i}} X_{\sigma^{-1}(j)} \mathtt{des_X}(\sigma^{-1}). \end{displaymath}

\item Let \begin{math}\sigma \in \mathcal{S}_n^{i\,(i+1)}\end{math}. Again the following facts hold:
\begin{itemize}
\item[\begin{math}\triangleright\end{math}] \begin{math}\sigma^{-1} \in \mathcal{S}_n^{\sigma^{-1}(i),(\sigma^{-1}(i)+1)}\end{math},
\item[\begin{math}\triangleright\end{math}] \begin{math}\kappa_{i,i+1}(\sigma^{-1}) \in \mathcal{S}_n^{(\sigma^{-1}(i)+1),\sigma^{-1}(i)}\end{math},
\item[\begin{math}\triangleright\end{math}] \begin{math}\mathtt{des_X}(\sigma^{-1})= \mathtt{des_X}\big(\kappa_{i,i+1}(\sigma^{-1})\big)- X_i\end{math},
\item[\begin{math}\triangleright\end{math}] \begin{math}\big(\kappa_{i,i+1}(\sigma^{-1})\big)^{-1} \in \mathcal{S}_n^{(i+1),i}\end{math}.
\end{itemize}
Thus
\begin{displaymath}\sum_{\sigma \in \mathcal{S}_n^{i\,(i+1)}} X_{\sigma^{-1}(i)} \mathtt{des_X}(\sigma^{-1})= \sum_{\sigma \in \mathcal{S}_n^{i,(i+1)}} X_{\sigma^{-1}(i)}  \Big(\mathtt{des_X}\big(\kappa_{i,i+1}(\sigma^{-1})\big)- X_i\Big)\end{displaymath}
\begin{displaymath}= \sum_{\sigma \in \mathcal{S}_n^{i\,(i+1)}} X_{\sigma^{-1}(i)} \mathtt{des_X}\big(\kappa_{i,i+1}(\sigma^{-1})\big)- \sum_{\sigma \in \mathcal{S}_n^{i,(i+1)}} X_{\sigma^{-1}(i)} X_i= \sum_{\sigma \in \mathcal{S}_n^{(i+1),i}} X_{\sigma^{-1}(i+1)} \mathtt{des_X}(\sigma^{-1}) -X_i\sum_{k=1}^{n-1}\sum_{\substack{\sigma \in \mathcal{S}_n^{i,(i+1)} \\ \sigma(k)=i}} X_k\end{displaymath}
\begin{displaymath}= \sum_{\sigma \in \mathcal{S}_n^{(i+1),i}} X_{\sigma^{-1}(i+1)} \mathtt{des_X}(\sigma^{-1}) -X_i (n-2)!\mathfrak{d_n}.\end{displaymath}
\end{itemize}
\end{proof}

\noindent Let \begin{math}n \geq 3\end{math}. We write \begin{math}\iota\end{math} for the identity permutation of \begin{math}\mathcal{S}_n\end{math} and define:
\begin{displaymath}\left.\begin{array}{cccc}
\Pi_n: & \mathcal{S}_n & \rightarrow & \mathbb{R}[X_1, \dots, X_{n-1}]\\
   & \tau & \mapsto & \sum_{\sigma \in \mathcal{S}_n}  \mathtt{des_X}(\tau \sigma)\mathtt{des_X}(\sigma^{-1})
\end{array}\right.\end{displaymath}

\newtheorem{XMZGrSatz}[XMNFijI]{Lemma}
\begin{XMZGrSatz}
Let \begin{math}n \geq 3\end{math} and \begin{math}\tau \in \mathcal{S}_n\end{math}. Then
\begin{displaymath}\Pi_n(\tau) = \Pi_n(\iota) - \mathtt{des_X}(\tau)(n-2)!\mathfrak{d_n}.\end{displaymath}
\end{XMZGrSatz}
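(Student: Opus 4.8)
The plan is to expand the left factor $\mathtt{des_X}(\tau\sigma)$ position by position, reindex the resulting double sum so that it becomes governed by the inversions of $\tau$, and then let Lemma 3.1 do all the work. Concretely, for a fixed $\sigma$ a position $k\in[n-1]$ is a descent of $\tau\sigma$ exactly when $\tau(\sigma(k))>\tau(\sigma(k+1))$. Writing $a=\sigma(k)$ and $b=\sigma(k+1)$, so that $k=\sigma^{-1}(a)$ and $\sigma^{-1}(b)=\sigma^{-1}(a)+1$, this records
$$\mathtt{des_X}(\tau\sigma)=\sum_{\substack{a\neq b,\ \tau(a)>\tau(b)\\ \sigma^{-1}(b)=\sigma^{-1}(a)+1}}X_{\sigma^{-1}(a)}.$$
Substituting into the definition of $\Pi_n(\tau)$ and interchanging the order of summation, summing first over ordered pairs $(a,b)$ with $\tau(a)>\tau(b)$ and then over the $\sigma$ with $\sigma^{-1}(b)=\sigma^{-1}(a)+1$, i.e. over $\sigma\in\mathcal{S}_n^{a,b}$, gives
$$\Pi_n(\tau)=\sum_{\substack{a\neq b\\ \tau(a)>\tau(b)}}\Big(\sum_{\sigma\in\mathcal{S}_n^{a,b}}X_{\sigma^{-1}(a)}\mathtt{des_X}(\sigma^{-1})\Big).$$
I would abbreviate the inner sum as $F(a,b)$, so that $\Pi_n(\tau)=\sum_{\tau(a)>\tau(b)}F(a,b)$; note that $F$ is precisely the object on both sides of Lemma 3.1.

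Next I would reorganize over unordered pairs: for each $\{a,b\}$ with $a<b$, injectivity of $\tau$ forces exactly one of the two ordered pairs to contribute, namely $F(a,b)$ when $\tau(a)>\tau(b)$ and $F(b,a)$ when $\tau(a)<\tau(b)$. Specializing to $\tau=\iota$ gives $\Pi_n(\iota)=\sum_{a<b}F(b,a)$, since then $\tau(a)<\tau(b)$ always holds. Subtracting, the contributions from the pairs with $\tau(a)<\tau(b)$ cancel, leaving
$$\Pi_n(\tau)-\Pi_n(\iota)=\sum_{\substack{a<b\\ \tau(a)>\tau(b)}}\big(F(a,b)-F(b,a)\big)=\sum_{(a,b)\in\mathtt{INV}(\tau)}\big(F(a,b)-F(b,a)\big).$$

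Finally I would apply Lemma 3.1 term by term to this sum over $\mathtt{INV}(\tau)$. For a non-adjacent inversion, $b>a+1$, the first case of the lemma gives $F(a,b)=F(b,a)$, so such a term vanishes. For an adjacent inversion, $b=a+1$, the second case gives $F(a,a+1)-F(a+1,a)=-X_a(n-2)!\,\mathfrak{d_n}$. Since an adjacent inversion $(a,a+1)\in\mathtt{INV}(\tau)$ is the same as a descent $a\in\mathtt{DES}(\tau)$, summing over the surviving terms produces $-(n-2)!\,\mathfrak{d_n}\sum_{a\in\mathtt{DES}(\tau)}X_a=-\mathtt{des_X}(\tau)(n-2)!\,\mathfrak{d_n}$, which is exactly the asserted identity.

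The only genuinely delicate point is the reindexing in the first step: one must verify that for fixed $\sigma$ the map $k\mapsto(\sigma(k),\sigma(k+1))$ is a bijection from $[n-1]$ onto the adjacent value-pairs, that the condition $\sigma^{-1}(b)=\sigma^{-1}(a)+1$ matches the definition of $\mathcal{S}_n^{a,b}$, and hence that the interchange of summation is legitimate and the inner sums coincide with the $F(a,b)$ of Lemma 3.1. Everything afterwards is bookkeeping, and the identification of adjacent inversions with descents is precisely what converts the $\mathcal{S}_n^{i,(i+1)}$ correction term of Lemma 3.1 into the factor $\mathtt{des_X}(\tau)$.
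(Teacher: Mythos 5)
Your argument is correct, and it takes a genuinely different route from the paper. You expand $\mathtt{des_X}(\tau\sigma)$ over adjacent value-pairs $(a,b)=(\sigma(k),\sigma(k+1))$, interchange summation so that $\Pi_n(\tau)=\sum_{\tau(a)>\tau(b)}F(a,b)$ with $F(a,b)=\sum_{\sigma\in\mathcal{S}_n^{a,b}}X_{\sigma^{-1}(a)}\mathtt{des_X}(\sigma^{-1})$ exactly the quantity appearing in Lemma 3.1, cancel against $\Pi_n(\iota)=\sum_{a<b}F(b,a)$ over the non-inverted pairs, and then let Lemma 3.1 kill the non-adjacent inversions and convert each adjacent inversion $a\in\mathtt{DES}(\tau)$ into $-X_a(n-2)!\,\mathfrak{d_n}$; each of these steps checks out, and the ``delicate'' reindexing you flag is indeed a bijection since every pair with $\sigma^{-1}(b)=\sigma^{-1}(a)+1$ arises from the unique position $k=\sigma^{-1}(a)\in[n-1]$. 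The paper instead argues by induction on the length of a word $\tau=\tau_k\cdots\tau_1\iota$ in adjacent transpositions: it introduces a recursively defined auxiliary function $\phi$, computes $\Pi_n\big((j\ j{+}1)\tau'\big)-\Pi_n(\tau')$ by splitting the sum over $\sigma$ according to the relative positions of $j$ and $j+1$ in $\tau'\sigma$ (again via Lemma 3.1), and recovers $\phi(\tau)=\mathtt{des_X}(\tau)$ by telescoping. Your one-shot reindexing buys a shorter, non-inductive proof that dispenses with $\phi$ altogether (the paper only establishes a posteriori that $\phi$ is independent of the chosen word for $\tau$), while the paper's local-move analysis isolates the effect of a single adjacent transposition, which is conceptually what Lemma 3.1 encodes and is closer in spirit to standard Coxeter-group manipulations. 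Both arguments rest on exactly the same lemma, so the difference is one of organization rather than of substance, but your organization is the cleaner one.
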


\begin{proof} Every permutation \begin{math}\tau \in \mathcal{S}_n\end{math} can be written as \begin{math}\tau=\tau_k \dots \tau_2 \tau_1 \iota\end{math} for some \begin{math}\tau_i \in \{(j\ j+1)\,|\,1 \leq j \leq n-1\}\end{math}. We consider the following mapping: 
\begin{displaymath}\phi: \left\{ 
\begin{array}{ccc}
\mathcal{S}_n & \rightarrow & \mathbb{R}[X_1, \dots, X_{n-1}]\\
\tau=\tau_k \dots \tau_2 \tau_1 \iota & \mapsto & \Big(\mathtt{des_X}(\tau)- \mathtt{des_X}(\tau_{k-1} \dots \tau_2 \tau_1 \iota)\Big) + \phi\Big(\tau_{k-1} \dots \tau_2 \tau_1 \iota \Big) 
\end{array}
\right.,\end{displaymath}
\begin{displaymath}\text{where}\ \phi(\iota)=0\ \text{and}\ \phi\Big((j\ j+1)\Big)=X_j.\end{displaymath}
We are going to prove that \begin{math} \Pi_n(\tau) = \Pi_n(\iota) - \phi(\tau)(n-2)!\mathfrak{d_n}\end{math}.\\
The case \begin{math}k=0\end{math} and \begin{math}\tau=\iota\end{math} is trivial.\\
For \begin{math}k=1\end{math} and \begin{math}\tau= (j\ j+1)\end{math} we have:
\begin{displaymath}\Pi_n\big((j\ j+1)\big) = \sum_{\sigma \in \mathcal{S}_n^{j\,-\,(j+1)}} \mathtt{des_X}\Big((j\ j+1)\sigma\Big) \mathtt{des_X}\Big(\sigma^{-1}\Big)+\sum_{\sigma \in \mathcal{S}_n^{(j+1)\,-\,j}} \mathtt{des_X}\Big((j\ j+1)\sigma\Big) \mathtt{des_X}\Big(\sigma^{-1}\Big)+  \end{displaymath}
 \begin{displaymath}\sum_{\sigma \in \mathcal{S}_n^{j,(j+1)}} \mathtt{des_X}\Big((j\ j+1)\sigma\Big) \mathtt{des_X}\Big(\sigma^{-1}\Big)+\sum_{\sigma \in \mathcal{S}_n^{(j+1),j}} \mathtt{des_X}\Big((j\ j+1)\sigma\Big) \mathtt{des_X}\Big(\sigma^{-1}\Big)=\end{displaymath}

\begin{displaymath}\sum_{\sigma \in \mathcal{S}_n^{j\,-\,(j+1)}} \mathtt{des_X}(\sigma) \mathtt{des_X}(\sigma^{-1})+ \sum_{\sigma \in \mathcal{S}_n^{(j+1)\,-\,j}} \mathtt{des_X}(\sigma) \mathtt{des_X}(\sigma^{-1})+ \end{displaymath}
\begin{displaymath}\sum_{\sigma \in \mathcal{S}_n^{j,(j+1)}} \Big(\mathtt{des_X}(\sigma)+ X_{\sigma^{-1}(j)}\Big)\mathtt{des_X}(\sigma^{-1})+ \sum_{\sigma \in \mathcal{S}_n^{(j+1),j}} \Big(\mathtt{des_X}(\sigma)- X_{\sigma^{-1}(j+1)}\Big)\mathtt{des_X}(\sigma^{-1})= \end{displaymath}

\begin{displaymath}\Pi_n(\iota)+ \sum_{\sigma \in \mathcal{S}_n^{j,(j+1)}} X_{\sigma^{-1}(j)} \mathtt{des_X}(\sigma^{-1})- \sum_{\sigma \in \mathcal{S}_n^{(j+1),j}} X_{\sigma^{-1}(j+1)} \mathtt{des_X}(\sigma^{-1})=   \end{displaymath}

\begin{displaymath}\Pi_n(\iota)+ \sum_{\sigma \in \mathcal{S}_n^{(j+1),j}} X_{\sigma^{-1}(j+1)}\mathtt{des_X}(\sigma^{-1})-  X_j(n-2)!\mathfrak{d_n}- \sum_{\sigma \in \mathcal{S}_n^{(j+1),j}} X_{\sigma^{-1}(j+1)} \mathtt{des_X}(\sigma^{-1})= \end{displaymath}

\begin{displaymath}\Pi_n(\iota)- \phi\Big((j\ j+1)\Big)(n-2)!\mathfrak{d_n}.\end{displaymath}

\noindent Now we assume that the assertion is proven for \begin{math}k-1 \geq 1\end{math}.\\
Let \begin{math}k \geq 2\end{math} and \begin{math}\tau=\tau_k \dots \tau_2 \tau_1 \iota =(j\ j+1)\tau'\end{math}, where \begin{math}\tau_k= (j\ j+1)\end{math} and \begin{math}\tau'=\tau_{k-1} \dots \tau_2 \tau_1 \iota\end{math}:
\begin{displaymath}\Pi_n\big((j\ j+1) \tau'\big)= \sum_{\{\sigma \in \mathcal{S}_n\,|\,\tau' \sigma \in \mathcal{S}_n^{j\,-\,(j+1)}\}} \mathtt{des_X}\Big((j\ j+1)\tau' \sigma\Big) \mathtt{des_X}\Big(\sigma^{-1}\Big)+\sum_{\{\sigma \in \mathcal{S}_n\,|\,\tau'\sigma \in \mathcal{S}_n^{(j+1)\,-\,j}\}} \mathtt{des_X}\Big((j\ j+1)\tau'\sigma\Big) \mathtt{des_X}\Big(\sigma^{-1}\Big)+ \end{displaymath}  \begin{displaymath} \sum_{\{\sigma \in \mathcal{S}_n\,|\,\tau' \sigma \in \mathcal{S}_n^{j,(j+1)} \}} \mathtt{des_X}\Big((j\ j+1)\tau'\sigma\Big) \mathtt{des_X}\Big(\sigma^{-1}\Big)+ \sum_{\{\sigma \in \mathcal{S}_n\,|\,\tau' \sigma \in \mathcal{S}_n^{(j+1),j} \}} \mathtt{des_X}\Big((j\ j+1)\tau'\sigma\Big) \mathtt{des_X}\Big(\sigma^{-1}\Big)=\end{displaymath}

\begin{displaymath}\sum_{\{\sigma \in \mathcal{S}_n\,|\,\tau'\sigma \in \mathcal{S}_n^{j\,-\,(j+1)}\}} \mathtt{des_X}(\tau' \sigma) \mathtt{des_X}(\sigma^{-1})+ \sum_{\{\sigma \in \mathcal{S}_n\,|\,\tau'\sigma \in \mathcal{S}_n^{(j+1)\,-\,j}\}} \mathtt{des_X}(\tau' \sigma) \mathtt{des_X}(\sigma^{-1})+ \end{displaymath}
\begin{displaymath}\sum_{\{\sigma \in \mathcal{S}_n\,|\,\tau' \sigma \in \mathcal{S}_n^{j,(j+1)} \}} \Big(\mathtt{des_X}(\tau'\sigma)+ X_{\sigma^{-1}\tau'^{-1}(j)}\Big) \mathtt{des_X}(\sigma^{-1})+\sum_{\{\sigma \in \mathcal{S}_n\,|\,\tau' \sigma \in \mathcal{S}_n^{(j+1),j} \}} \Big(\mathtt{des_X}(\tau'\sigma) - X_{\sigma^{-1}\tau'^{-1}(j+1)}\Big) \mathtt{des_X}(\sigma^{-1})= \end{displaymath}

\begin{displaymath}\Pi_n(\tau')+ \sum_{\{\sigma \in \mathcal{S}_n\,|\,\tau' \sigma \in \mathcal{S}_n^{j,(j+1)} \}} X_{\sigma^{-1}\tau'^{-1}(j)}  \mathtt{des_X}(\sigma^{-1})- \sum_{\{\sigma \in \mathcal{S}_n\,|\,\tau' \sigma \in \mathcal{S}_n^{(j+1),j} \}} X_{\sigma^{-1}\tau'^{-1}(j+1)} \mathtt{des_X}(\sigma^{-1})= \end{displaymath}

\begin{displaymath}\Pi_n(\tau')+ \sum_{\sigma \in \mathcal{S}_n^{\tau'^{-1}(j),\tau'^{-1}(j+1)}} X_{\sigma^{-1}\tau'^{-1}(j)}  \mathtt{des_X}(\sigma^{-1})- \sum_{\sigma \in \mathcal{S}_n^{\tau'^{-1}(j+1),\tau'^{-1}(j)}} X_{\sigma^{-1}\tau'^{-1}(j+1)} \mathtt{des_X}(\sigma^{-1}).\end{displaymath}

\begin{itemize}

\item If \begin{math}\tau' \in \mathcal{S}_n^{j\,-\,(j+1)}\end{math} i.e. \begin{math}\tau'^{-1}(j+1)-\tau'^{-1}(j)>1\end{math} and \begin{math}\mathtt{des_X}\Big((j\ j+1)\tau'\Big)= \mathtt{des_X}(\tau')\end{math}, then 
\begin{displaymath}\sum_{\sigma \in \mathcal{S}_n^{\tau'^{-1}(j),\tau'^{-1}(j+1)}} X_{\sigma^{-1}\tau'^{-1}(j)} \mathtt{des_X}(\sigma^{-1})= \sum_{\sigma \in \mathcal{S}_n^{\tau'^{-1}(j+1),\tau'^{-1}(j)}} X_{\sigma^{-1}\tau'^{-1}(j+1)} \mathtt{des_X}(\sigma^{-1})\end{displaymath} and
\begin{displaymath}\Pi_n\big((j\ j+1) \tau'\big)=\Pi_n(\tau') =  \Pi_n(\iota) -  \phi\Big((j\ j+1)\tau'\Big)(n-2)!\mathfrak{d_n}.\end{displaymath}

\item If \begin{math}\tau' \in \mathcal{S}_n^{(j+1)\,-\,j}\end{math} i.e. \begin{math}\tau'^{-1}(j)-\tau'^{-1}(j+1)>1\end{math} and \begin{math}\mathtt{des_X}\Big((j\ j+1)\tau'\Big)=\mathtt{des_X}(\tau')\end{math}, then 
\begin{displaymath}\sum_{\sigma \in \mathcal{S}_n^{\tau'^{-1}(j),\tau'^{-1}(j+1)}} X_{\sigma^{-1}\tau'^{-1}(j)} \mathtt{des_X}(\sigma^{-1})= \sum_{\sigma \in \mathcal{S}_n^{\tau'^{-1}(j+1),\tau'^{-1}(j)}} X_{\sigma^{-1}\tau'^{-1}(j+1)} \mathtt{des_X}(\sigma^{-1})\end{displaymath}
and, once again, we have \begin{displaymath}\Pi_n\big((j\ j+1) \tau'\big)=\Pi_n(\tau') =  \Pi_n(\iota) -  \phi\Big((j\ j+1)\tau'\Big)(n-2)!\mathfrak{d_n}.\end{displaymath}

\item If \begin{math}\tau' \in \mathcal{S}_n^{j,(j+1)}\end{math} i.e. \begin{math}\tau'^{-1}(j+1)-\tau'^{-1}(j)=1\end{math} and \begin{math}\mathtt{des_X}\Big((j\ j+1)\tau'\Big)= \mathtt{des_X}(\tau')+X_{\tau'^{-1}(j)}\end{math}, then 
\begin{displaymath}\sum_{\sigma \in \mathcal{S}_n^{\tau'^{-1}(j),\tau'^{-1}(j+1)}} X_{\sigma^{-1}\tau'^{-1}(j)} \mathtt{des_X}(\sigma^{-1})= \sum_{\sigma \in \mathcal{S}_n^{\tau'^{-1}(j+1),\tau'^{-1}(j)}} X_{\sigma^{-1}\tau'^{-1}(j+1)} \mathtt{des_X}(\sigma^{-1})\ -X_{\tau'^{-1}(j)}(n-2)!\mathfrak{d_n}\end{displaymath} and
\begin{displaymath}\Pi_n\big( (j\ j+1) \tau' \big)=\Pi_n\big(\tau' \big)  -X_{\tau'^{-1}(j)}(n-2)!\mathfrak{d_n}= \Pi_n(\iota)- \phi\Big((j\ j+1)\tau'\Big)(n-2)!\mathfrak{d_n}. \end{displaymath}

\item If \begin{math}\tau' \in \mathcal{S}_n^{(j+1),j}\end{math} i.e. \begin{math}\tau'^{-1}(j)-\tau'^{-1}(j+1)=1\end{math} and \begin{math}\mathtt{des_X}\Big((j\ j+1)\tau'\Big)= \mathtt{des_X}(\tau')- X_{\tau'^{-1}(j+1)}\end{math}, then 
\begin{displaymath}\sum_{\sigma \in \mathcal{S}_n^{\tau'^{-1}(j),\tau'^{-1}(j+1)}}X_{\sigma^{-1}\tau'^{-1}(j)} \mathtt{des_X}(\sigma^{-1})=\sum_{\sigma \in \mathcal{S}_n^{\tau'^{-1}(j+1),\tau'^{-1}(j)}} X_{\sigma^{-1}\tau'^{-1}(j+1)} \mathtt{des_X}(\sigma^{-1})\ + X_{\tau'^{-1}(j+1)}(n-2)!\mathfrak{d_n}\end{displaymath} and
\begin{displaymath}\Pi_n\big( (j\ j+1) \tau' \big)=\Pi_n\big(\tau' \big)  +X_{\tau'^{-1}(j)}(n-2)!\mathfrak{d_n}= \Pi_n(\iota)- \phi\Big((j\ j+1)\tau'\Big)(n-2)!\mathfrak{d_n}. \end{displaymath}
\end{itemize}

\noindent Finally, we have
\begin{equation*}\left. \begin{array}{ccc}
\Pi_n(\tau) & = & \Pi_n(\iota) - \phi(\tau)(n-2)!\mathfrak{d_n}\\
& = & \Pi_n(\iota) - \big(\Big(\mathtt{des_X}(\tau)- \mathtt{des_X}(\tau_{k-1} \dots \tau_2 \tau_1 \iota)\Big)+ \phi\Big(\tau_{k-1} \dots \tau_2 \tau_1 \iota \Big)\big)(n-2)!\mathfrak{d_n}\\
& = & \Pi_n(\iota) - \Big(\mathtt{des_X}(\tau)- \mathtt{des_X}(\iota)\Big)(n-2)!\mathfrak{d_n}\\
& = & \Pi_n(\iota) - \mathtt{des_X}(\tau)(n-2)!\mathfrak{d_n}.
\end{array} \right. \end{equation*}
\end{proof}

\noindent Now we are in position to determine the minimal polynomial of \begin{math}\mathfrak{D_n}\end{math}.\\
We write \begin{math}\mathrm{I}_{n!}\end{math} for the matrix identity of dimension \begin{math}n!\end{math}.

\newtheorem{XMGrKoro}[XMNFijI]{Proposition}
\begin{XMGrKoro}
Let \begin{math}n \geq 3\end{math}. The minimal polynomial of \begin{math}\mathfrak{D_n}\end{math} is:  \begin{displaymath}X \big(X- \frac{n!}{2}\mathfrak{d_n}\big) \big(X+ (n-2)!\mathfrak{d_n}\big).\end{displaymath}
\end{XMGrKoro}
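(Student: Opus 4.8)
The plan is to exploit the group-algebra structure: $\mathfrak{D_n}$ is the matrix of left multiplication by $\sum_{\sigma \in \mathcal{S}_n}\mathtt{des_X}(\sigma)\sigma$, so its square is computed by convolution. First I would write the $(\pi,\tau)$-entry of $\mathfrak{D_n}^2$ as $\sum_{\rho \in \mathcal{S}_n}\mathtt{des_X}(\pi\rho^{-1})\mathtt{des_X}(\rho\tau^{-1})$ and, after the change of summation variable $\sigma=\rho\tau^{-1}$ (and the reindexing $\sigma\mapsto\sigma^{-1}$), recognize it as $\Pi_n(\pi\tau^{-1})$. This is precisely why $\Pi_n$ was introduced: Lemma 3.2 now linearizes the square. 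Substituting $\Pi_n(\pi\tau^{-1}) = \Pi_n(\iota) - \mathtt{des_X}(\pi\tau^{-1})(n-2)!\mathfrak{d_n}$ entrywise yields the quadratic identity
\begin{displaymath}\mathfrak{D_n}^2 = \Pi_n(\iota)\,\mathrm{J} - (n-2)!\,\mathfrak{d_n}\,\mathfrak{D_n},\end{displaymath}
where $\mathrm{J}$ denotes the $n!\times n!$ all-ones matrix (the constant $\Pi_n(\iota)$ sits in every entry).

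Next I would remove the rank-one term $\Pi_n(\iota)\mathrm{J}$. By Lemma 2.3 every row sum of $\mathfrak{D_n}$ equals $\frac{n!}{2}\mathfrak{d_n}$, so $\mathfrak{D_n}\mathrm{J} = \frac{n!}{2}\mathfrak{d_n}\,\mathrm{J}$ and hence $\big(\mathfrak{D_n} - \frac{n!}{2}\mathfrak{d_n}\mathrm{I}_{n!}\big)\mathrm{J} = 0$. Multiplying the quadratic identity on the left by $\big(\mathfrak{D_n} - \frac{n!}{2}\mathfrak{d_n}\mathrm{I}_{n!}\big)$ kills the $\mathrm{J}$-term and leaves
\begin{displaymath}\big(\mathfrak{D_n} - \tfrac{n!}{2}\mathfrak{d_n}\mathrm{I}_{n!}\big)\,\mathfrak{D_n}\,\big(\mathfrak{D_n} + (n-2)!\mathfrak{d_n}\mathrm{I}_{n!}\big) = 0,\end{displaymath}
which is exactly $p(\mathfrak{D_n}) = 0$ for $p(X) = X\big(X - \frac{n!}{2}\mathfrak{d_n}\big)\big(X + (n-2)!\mathfrak{d_n}\big)$. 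Thus the minimal polynomial divides $p$.

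It remains to prove minimality, i.e. that no proper divisor of $p$ annihilates $\mathfrak{D_n}$. Here I would work over the field $\mathbb{R}(X_1,\dots,X_{n-1})$, in which the three roots $0$, $\frac{n!}{2}\mathfrak{d_n}$, $-(n-2)!\mathfrak{d_n}$ are pairwise distinct for $n\ge 3$. From \eqref{Dn} the value $\frac{n!}{2}\mathfrak{d_n}$ is an eigenvalue with one-dimensional eigenspace, so $\big(X - \frac{n!}{2}\mathfrak{d_n}\big)$ divides the minimal polynomial; the only proper divisors of $p$ still containing this factor are $\big(X - \frac{n!}{2}\mathfrak{d_n}\big)$, $X\big(X - \frac{n!}{2}\mathfrak{d_n}\big)$ and $\big(X - \frac{n!}{2}\mathfrak{d_n}\big)\big(X + (n-2)!\mathfrak{d_n}\big)$. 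Each has simple roots, so equality with the minimal polynomial would force $\mathfrak{D_n}$ to be diagonalizable with at most the listed eigenvalues, the eigenvalue $\frac{n!}{2}\mathfrak{d_n}$ occurring with multiplicity $1$. Since every diagonal entry of $\mathfrak{D_n}$ is $\mathtt{des_X}(\iota)=0$, its trace vanishes; but in the three cases the trace would equal $n!\,\frac{n!}{2}\mathfrak{d_n}$, $\frac{n!}{2}\mathfrak{d_n}$, and $\frac{n!}{2}\mathfrak{d_n} - (n!-1)(n-2)!\mathfrak{d_n}$ respectively, all nonzero for $n\ge3$. These contradictions eliminate every proper divisor, so the minimal polynomial is $p$ itself.

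The heavy lifting is concentrated in the already-established Lemma 3.2, which carries out the delicate sign bookkeeping (via Lemma 3.1) needed to linearize $\Pi_n$; granting it, the remaining work is the convolution identification of $\mathfrak{D_n}^2$ with $\Pi_n$ and a clean elimination of $\mathrm{J}$. The one point demanding care in the present argument is the minimality step: one must invoke, from \eqref{Dn}, that the eigenvalue $\frac{n!}{2}\mathfrak{d_n}$ has a one-dimensional eigenspace, and then combine this with the vanishing trace to exclude the quadratic and linear divisors, rather than trying to establish diagonalizability directly (which is instead a consequence of this proposition).
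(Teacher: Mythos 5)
Your proof is correct, and its first half coincides with the paper's: you identify the $(\pi,\tau)$-entry of $\mathfrak{D_n}^2$ with $\Pi_n(\pi\tau^{-1})$, use Lemma 3.2 to linearize this as $\Pi_n(\iota)-\mathtt{des_X}(\pi\tau^{-1})(n-2)!\mathfrak{d_n}$, and then annihilate the resulting constant (all-ones) matrix with the factor $\mathfrak{D_n}-\frac{n!}{2}\mathfrak{d_n}\mathrm{I}_{n!}$ via the row-sum computation of Lemma 2.3 — exactly the chain of equalities in the paper's proof of Proposition 3.3. Where you genuinely diverge is the minimality step. The paper excludes the three proper quadratic divisors by inspecting entries: the diagonal entries of $\big(\mathfrak{D_n}-\frac{n!}{2}\mathfrak{d_n}\mathrm{I}_{n!}\big)\mathfrak{D_n}$ and of $\mathfrak{D_n}\big(\mathfrak{D_n}+(n-2)!\mathfrak{d_n}\mathrm{I}_{n!}\big)$ equal $\Pi_n(\iota)\neq 0$, and the $(\pi,\iota)$-entries of $\big(\mathfrak{D_n}-\frac{n!}{2}\mathfrak{d_n}\mathrm{I}_{n!}\big)\big(\mathfrak{D_n}+(n-2)!\mathfrak{d_n}\mathrm{I}_{n!}\big)$ vary with $\mathtt{des_X}(\pi)$, so none of the three products vanishes. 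You instead argue spectrally: by \eqref{Dn} the factor $X-\frac{n!}{2}\mathfrak{d_n}$ must divide the minimal polynomial and the corresponding eigenspace is one-dimensional, and the vanishing trace then contradicts each remaining proper divisor. Both arguments are sound. Yours imports the Perron--Frobenius-type result \eqref{Dn} already at this stage (the paper defers it to Section 5, where it fixes the multiplicity of the top eigenvalue), whereas the paper's version is self-contained given the easy facts that $\Pi_n(\iota)\neq 0$ and that $\mathtt{des_X}$ is non-constant. A point in favour of your route is that it makes visible why the one-dimensionality of the top eigenspace is genuinely needed: the trace identity alone cannot always exclude the divisor $\big(X-\frac{n!}{2}\mathfrak{d_n}\big)\big(X+(n-2)!\mathfrak{d_n}\big)$, since $1+\binom{n}{2}$ can divide $n!$ for some $n$.
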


\begin{proof} Using Lemma 2.3 and Lemma 3.2, we get:
\begin{displaymath}\Big(\mathfrak{D_n} -  \frac{n!}{2}\mathfrak{d_n}\mathrm{I}_{n!} \Big)\mathfrak{D_n} \Big(\mathfrak{D_n}+ (n-2)!\mathfrak{d_n}\mathrm{I}_{n!}\Big)=\Big(\mathfrak{D_n} -  \frac{n!}{2}\mathfrak{d_n}\mathrm{I}_{n!} \Big) \Big(\Pi_n(\iota) \Big)_{\pi, \tau \in \mathcal{S}_n} = \Big(0\Big)_{\pi, \tau \in \mathcal{S}_n}.\end{displaymath}
Hence the minimal polynomial of \begin{math}\mathfrak{D_n}\end{math} divides \begin{math}X \big(X- \frac{n!}{2}\mathfrak{d_n}\big) \big(X+ (n-2)!\mathfrak{d_n}\big)\end{math}.\\
Since \begin{math}\Big((\mathfrak{D_n}- \frac{n!}{2}\mathfrak{d_n} \mathrm{I}_{n!})\mathfrak{D_n}\Big)_{\pi, \pi}=\Pi_n\end{math} resp. \begin{math}\Big(\mathfrak{D_n} \big(\mathfrak{D_n}+ (n-2)!\mathfrak{d_n}\mathrm{I}_{n!}\big)\Big)_{\pi, \pi}=\Pi_n\end{math}, then \begin{math}(X- \frac{n!}{2}\mathfrak{d_n})X\end{math} resp. \begin{math}X \big(X+ (n-2)!\mathfrak{d_n} \big)\end{math} is not the minimal polynomial of \begin{math}\mathfrak{D_n}\end{math}.\\
 We have  \begin{displaymath}\Big(\big(\mathfrak{D_n}- \frac{n!}{2}\mathfrak{d_n} \mathrm{I}_{n!}\big) \big(\mathfrak{D_n}+ (n-2)!\mathfrak{d_n}\mathrm{I}_{n!}\big)\Big)_{\pi, \iota}= \Pi_n - \mathtt{des_X}(\pi)\frac{n!}{2}\mathfrak{d_n}.\end{displaymath}  Since there are \begin{math}\pi, \tau \in \mathcal{S}_n\end{math} such that \begin{math}\mathtt{des_X}(\pi) \neq \mathtt{des_X}(\tau)\end{math}, then \begin{displaymath}\Big(\big(\mathfrak{D_n}- \frac{n!}{2}\mathfrak{d_n} \mathrm{I}_{n!}\big) \big(\mathfrak{D_n}+ (n-2)!\mathfrak{d_n}\mathrm{I}_{n!}\big)\Big)_{\pi, \tau \in \mathcal{S}_n} \neq (0)_{\pi, \tau \in \mathcal{S}_n},\end{displaymath} and \begin{math}\big(X- \frac{n!}{2}\mathfrak{d_n}\big) \big(X+ (n-2)!\mathfrak{d_n}\big)\end{math} is not the minimal polynomial of \begin{math}\mathfrak{D_n}\end{math}.\\
We conclude that the minimal polynomial of \begin{math}\mathfrak{D_n}\end{math} is \begin{math}X \big(X- \frac{n!}{2}\mathfrak{d_n}\big) \big(X+ (n-2)!\mathfrak{d_n}\big)\end{math}.  
\end{proof}

\section{Minimal Polynomial of the Multinomial Inversion Statistic}
\label{sec:inversion}

\noindent To determine the minimal polynomial of \begin{math}\mathfrak{I_n}\end{math} we need to prove several multinomial formulas:

\noindent Let \begin{math}n \geq 4\end{math} and \begin{math}i_1,j_1,i_2,j_2,a,b \in [n]\end{math}. We write:
\begin{displaymath}\mathcal{S}_n^{i_1\,j_1}:= \{ \sigma \in \mathcal{S}_n\ |\ \sigma(i_1) > \sigma(j_1)\},\end{displaymath}
\begin{displaymath}\mathcal{S}_n^{i_1\,j_1}{}_{|\bar{i_2}=a,\bar{j_2}=b}:= \{ \sigma \in \mathcal{S}_n^{i_1\,j_1}\ |\ \sigma^{-1}(i_2)=a,\ \sigma^{-1}(j_2)=b \}.\end{displaymath}

\newtheorem{drei}[XMNFijI]{Lemma}
\begin{drei}
Let \begin{math}n \geq 4\end{math} and \begin{math}i_1, j_1, i_2, j_2, a, b \in [n]\end{math}, with \begin{math}i_1 < j_1\end{math}, \begin{math}i_2 < j_2\end{math}. We have:
\begin{itemize}
\item if \begin{math}\{a, b\} = \{i_1, j_1\}\end{math}:
\begin{displaymath}\#\mathcal{S}_n^{i_1\,j_1}{}_{|\bar{i_2}=j_1,\bar{j_2}=i_1}  =  \#\mathcal{S}_n^{j_1\,i_1}{}_{|\bar{i_2}=i_1,\bar{j_2}=j_1}= (n-2)!.\end{displaymath}
\item if \begin{math}a,b \notin \{i_1, j_1\}\end{math}: 
\begin{displaymath}\#\mathcal{S}_n^{i_1\,j_1}{}_{|\bar{i_2}=a,\bar{j_2}=b}  = \#\mathcal{S}_n^{j_1\,i_1}{}_{|\bar{i_2}=a,\bar{j_2}=b} = \frac{(n-2)!}{2}.\end{displaymath}
\item if \begin{math}a=i_1\end{math} and \begin{math}b \notin \{i_1, j_1\}\end{math}:
\begin{equation*}\left. \begin{array}{ccc}
\#\mathcal{S}_n^{i_1\,j_1}{}_{|\bar{i_2}=i_1,\bar{j_2}=b} & = & (i_2-1)(n-3)!,\\ &&\\
\#\mathcal{S}_n^{j_1\,i_1}{}_{|\bar{i_2}=i_1,\bar{j_2}=b} & = & (n-i_2-1)(n-3)!.
\end{array} \right. \end{equation*}
\item if \begin{math}a \notin \{i_1, j_1\}\end{math} and \begin{math}b=i_1\end{math}:
\begin{equation*}\left. \begin{array}{ccc}
\#\mathcal{S}_n^{i_1\,j_1}{}_{|\bar{i_2}=a,\bar{j_2}=i_1} & = & (j_2-2)(n-3)!,\\ &&\\
\#\mathcal{S}_n^{j_1\,i_1}{}_{|\bar{i_2}=a,\bar{j_2}=i_1} & = & (n-j_2)(n-3)!.
\end{array} \right. \end{equation*}
\item if \begin{math}a=j_1\end{math} and \begin{math}b \notin \{i_1, j_1\}\end{math}:
\begin{equation*}\left. \begin{array}{ccc}
\#\mathcal{S}_n^{i_1\,j_1}{}_{|\bar{i_2}=j_1,\bar{j_2}=b} & = & (n-i_2-1)(n-3)!,\\ &&\\
\#\mathcal{S}_n^{j_1\,i_1}{}_{|\bar{i_2}=j_1,\bar{j_2}=b} & = &  (i_2-1)(n-3)!.
\end{array} \right. \end{equation*}
\item if \begin{math}a \notin \{i_1, j_1\}\end{math} and \begin{math}b=j_1\end{math}:
\begin{equation*}\left. \begin{array}{ccc}
\#\mathcal{S}_n^{i_1\,j_1}{}_{|\bar{i_2}=a,\bar{j_2}=j_1} & = & (n-j_2)(n-3)!,\\ &&\\
\#\mathcal{S}_n^{j_1\,i_1}{}_{|\bar{i_2}=a,\bar{j_2}=j_1} & = & (j_2-2)(n-3)!.
\end{array} \right. \end{equation*}
\end{itemize}
\end{drei}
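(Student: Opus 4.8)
The plan is to prove each of the six cases by a direct counting argument. In every case we count permutations $\sigma$ that place the value $i_2$ at position $a$ and the value $j_2$ at position $b$ (this is the meaning of $\sigma^{-1}(i_2)=a$ and $\sigma^{-1}(j_2)=b$), subject to an order condition on the entries at positions $i_1$ and $j_1$: for $\mathcal{S}_n^{i_1\,j_1}$ we require $\sigma(i_1)>\sigma(j_1)$, and for $\mathcal{S}_n^{j_1\,i_1}$ we require $\sigma(i_1)<\sigma(j_1)$. Since $i_2<j_2$ the two prescribed values are distinct, hence $a\neq b$. The six cases are governed by how the fixed positions $\{a,b\}$ meet the order positions $\{i_1,j_1\}$: both coincide, neither coincides, or exactly one coincides.

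First I would treat $\{a,b\}=\{i_1,j_1\}$. Here both order positions carry prescribed values, so the order condition is forced either to hold or to fail; the two sub-counts in the statement are exactly the assignments for which it holds. For instance, $\sigma^{-1}(i_2)=j_1$ and $\sigma^{-1}(j_2)=i_1$ give $\sigma(i_1)=j_2>i_2=\sigma(j_1)$, so $\sigma(i_1)>\sigma(j_1)$ holds automatically, and the remaining $n-2$ values fill the remaining $n-2$ positions freely, giving $(n-2)!$; the companion count for $\mathcal{S}_n^{j_1\,i_1}$ is symmetric. Next I would treat $a,b\notin\{i_1,j_1\}$: here neither order position is pinned, the remaining $n-2$ values (including those at $i_1$ and $j_1$) are placed freely, and the involution swapping the entries at positions $i_1$ and $j_1$ shows that exactly half of the $(n-2)!$ completions satisfy $\sigma(i_1)>\sigma(j_1)$, yielding $\frac{(n-2)!}{2}$; the same symmetry handles $\mathcal{S}_n^{j_1\,i_1}$.

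The four remaining cases, in which exactly one of $a,b$ equals one of $i_1,j_1$, are the substance of the lemma. In each of them one order position holds a prescribed value $v\in\{i_2,j_2\}$, and the order condition reduces to a one-sided constraint on the single free entry at the other order position: it must be smaller than $v$ or larger than $v$. I would count the admissible values in the relevant range $\{1,\dots,v-1\}$ or $\{v+1,\dots,n\}$, then multiply by $(n-3)!$ for the free placement of the remaining $n-3$ values. For example, in $\mathcal{S}_n^{i_1\,j_1}{}_{|\bar{i_2}=i_1,\bar{j_2}=b}$ we have $\sigma(i_1)=i_2$, so $\sigma(i_1)>\sigma(j_1)$ forces the entry at $j_1$ to lie in $\{1,\dots,i_2-1\}$; since the only other prescribed value $j_2$ exceeds $i_2$, none of these $i_2-1$ candidates is already used, giving $(i_2-1)(n-3)!$.

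The main obstacle is purely bookkeeping: in these mixed cases one must correctly decide whether the value prescribed at the non-order position lies inside the admissible range for the free order entry, since this determines the $\pm1$ correction. Concretely, when the free entry must exceed $i_2$ one must exclude the already-placed $j_2$ (yielding $n-i_2-1$ rather than $n-i_2$), whereas when it must exceed $j_2$ the already-placed $i_2$ lies below the range and no correction occurs (yielding $n-j_2$); the analogous distinctions produce $j_2-2$ versus $i_2-1$ in the remaining sub-cases. Keeping this distinction straight across the four cases, together with the symmetric roles of $\sigma(i_1)$ and $\sigma(j_1)$ when passing between $\mathcal{S}_n^{i_1\,j_1}$ and $\mathcal{S}_n^{j_1\,i_1}$, is the only place where care is genuinely required.
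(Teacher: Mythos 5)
Your argument is correct and is precisely the direct cardinality count that the paper compresses into the two words ``Cardinality calculating''; you supply the details the paper omits. In particular, your handling of the only delicate point --- whether the already-placed value $i_2$ or $j_2$ falls inside the admissible range for the free entry at the unpinned order position, which accounts for the $-1$ corrections distinguishing $n-i_2-1$ from $n-j_2$ and $j_2-2$ from $i_2-1$ --- matches all twelve counts in the statement.
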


\begin{proof} Cardinality calculating. \end{proof}

\noindent Let \begin{math}n \geq 4\end{math} and \begin{math}i,j \in [n]\end{math}. We write
\begin{displaymath}{}^{i\,j}\mathcal{S}_n:= \{ \sigma \in \mathcal{S}_n\ |\ \sigma^{-1}(i) > \sigma^{-1}(j)\},\end{displaymath}
\begin{displaymath}\chi_{i,j}=j-i-1.\end{displaymath}

\newtheorem{prel}[XMNFijI]{Lemma}
\begin{prel}
Let \begin{math}n \geq 4\end{math} and \begin{math}i_1, j_1, i_2, j_2 \in [n]\end{math} with $i_1 < j_1$ and \begin{math}i_2 < j_2\end{math}. Then: 
\begin{itemize}
\item[(a)]\begin{displaymath}\#{}^{i_1\,j_1}\mathcal{S}_n \cap \mathcal{S}_n^{i_2\,j_2} = (n-3)!\chi_{i_1,j_1} \chi_{i_2,j_2} + (n-2)!\frac{n^2-n+2}{4},\end{displaymath}
\item[(b)]\begin{displaymath}\#{}^{i_i\,j_1}\mathcal{S}_n \cap \mathcal{S}_n^{j_2\,i_2} =  (n-2)!\frac{n^2-n-2}{4} - (n-3)!\chi_{i_1,j_1} \chi_{i_2,j_2}.\end{displaymath} 
\end{itemize}
\end{prel}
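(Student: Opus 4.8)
The plan is to evaluate (a) by reducing it to a weighted sum of the six cardinalities already tabulated in Lemma 4.1, and then to obtain (b) from (a) by a short complementation argument; the whole proof is a structured counting computation with no conceptual surprises.

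First I would rewrite the left-hand side of (a) using the involution $\sigma \mapsto \sigma^{-1}$ on $\mathcal{S}_n$. Under it the condition $\sigma^{-1}(i_1) > \sigma^{-1}(j_1)$ defining ${}^{i_1\,j_1}\mathcal{S}_n$ becomes $\tau(i_1) > \tau(j_1)$, i.e.\ $\tau \in \mathcal{S}_n^{i_1\,j_1}$, while $\sigma(i_2) > \sigma(j_2)$ becomes $\tau^{-1}(i_2) > \tau^{-1}(j_2)$, i.e.\ $\tau \in {}^{i_2\,j_2}\mathcal{S}_n$. Partitioning $\mathcal{S}_n^{i_1\,j_1} \cap {}^{i_2\,j_2}\mathcal{S}_n$ according to the positions $a = \tau^{-1}(i_2)$ and $b = \tau^{-1}(j_2)$ of the values $i_2$ and $j_2$ (whose defining inequality is precisely $a>b$) gives
$$\#\big({}^{i_1\,j_1}\mathcal{S}_n \cap \mathcal{S}_n^{i_2\,j_2}\big) = \sum_{\substack{a,b \in [n] \\ a > b}} \#\mathcal{S}_n^{i_1\,j_1}{}_{|\bar{i_2}=a,\bar{j_2}=b},$$
so that every summand is one of the quantities computed in Lemma 4.1.

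The core step is to sort the ordered pairs $(a,b)$ with $a>b$ into the six classes of Lemma 4.1 and weight each class by its cardinality. Using $i_1 < j_1$ one finds exactly one pair with $\{a,b\}=\{i_1,j_1\}$ (namely $(j_1,i_1)$, contributing $(n-2)!$); $\binom{n-2}{2}$ pairs with $a,b\notin\{i_1,j_1\}$ (each contributing $\tfrac{(n-2)!}{2}$); and $i_1-1$, $n-i_1-1$, $j_1-2$, $n-j_1$ pairs in the classes $a=i_1$, $b=i_1$, $a=j_1$, $b=j_1$, contributing respectively $(i_2-1)(n-3)!$, $(j_2-2)(n-3)!$, $(n-i_2-1)(n-3)!$, $(n-j_2)(n-3)!$. (As a check these pair-counts sum to $\binom{n}{2}$.) Collecting terms, the coefficient of $(n-3)!$ is
$$(i_1-1)(i_2-1)+(n-i_1-1)(j_2-2)+(j_1-2)(n-i_2-1)+(n-j_1)(n-j_2),$$
and a direct expansion shows this collapses to $(n-2)^2+\chi_{i_1,j_1}\chi_{i_2,j_2}$. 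Writing $(n-3)!(n-2)^2=(n-2)(n-2)!$ and combining with the remaining $(n-2)!$-terms $1+\tfrac12\binom{n-2}{2}$ produces the coefficient $\tfrac{n^2-n+2}{4}$, which is exactly formula (a).

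Finally, (b) follows from (a) with no further casework: since $\sigma(i_2)\neq\sigma(j_2)$ for every permutation, the sets $\mathcal{S}_n^{i_2\,j_2}$ and $\mathcal{S}_n^{j_2\,i_2}$ partition $\mathcal{S}_n$, and $\#{}^{i_1\,j_1}\mathcal{S}_n=\tfrac{n!}{2}$ because left-multiplication by the transposition $(i_1\ j_1)$ interchanges the positions of the values $i_1$ and $j_1$ and hence swaps ${}^{i_1\,j_1}\mathcal{S}_n$ with its complement. Thus $\#({}^{i_1\,j_1}\mathcal{S}_n\cap\mathcal{S}_n^{j_2\,i_2})=\tfrac{n!}{2}-\#({}^{i_1\,j_1}\mathcal{S}_n\cap\mathcal{S}_n^{i_2\,j_2})$, and substituting (a) together with $\tfrac{n!}{2}=(n-2)!\tfrac{n^2-n}{2}$ yields (b). I expect the main obstacle to be the bookkeeping of the third step, namely keeping the boundary exclusions ($a,b\neq i_1,j_1$) and the orientation $a>b$ straight across all six classes; the final algebraic collapse is routine once the six contributions are correct.
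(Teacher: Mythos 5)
Your proof is correct, and it does differ from the paper's in organization. For part (a) the paper partitions ${}^{i_1\,j_1}\mathcal{S}_n \cap \mathcal{S}_n^{i_2\,j_2}$ directly into six subsets according to whether $\sigma(i_2)$, $\sigma(j_2)$ hit $i_1$, $j_1$ or neither, and counts each subset inline; it never formally invokes Lemma 4.1 (which it reserves for Lemma 4.4). You instead pass through $\sigma\mapsto\sigma^{-1}$ and sum the Lemma 4.1 cardinalities over the $\binom{n}{2}$ position-pairs $(a,b)$ with $a>b$, which is the same partition seen from the other side: your four products $(i_1-1)(i_2-1)$, $(n-i_1-1)(j_2-2)$, $(j_1-2)(n-i_2-1)$, $(n-j_1)(n-j_2)$ coincide term-for-term with the paper's, and your collapse to $(n-2)^2+\chi_{i_1,j_1}\chi_{i_2,j_2}$ followed by $(n-3)!(n-2)^2=(n-2)(n-2)!$ reproduces the coefficient $\tfrac{n^2-n+2}{4}$ exactly. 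The genuine divergence is in part (b): the paper repeats the entire six-case count from scratch, whereas you obtain (b) from (a) by complementation inside ${}^{i_1\,j_1}\mathcal{S}_n$ together with $\#{}^{i_1\,j_1}\mathcal{S}_n=\tfrac{n!}{2}$. Your route is shorter, reuses Lemma 4.1 as it was presumably intended, and carries a built-in consistency check (the two right-hand sides sum to $\tfrac{n!}{2}$); the paper's route is more self-contained but duplicates work. All the individual counts and the algebra in your argument check out.
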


\begin{proof} \begin{itemize}
\item[(a)] \begin{equation*}\left. \begin{array}{ccc} 
\#{}^{i_1\,j_1}\mathcal{S}_n \cap \mathcal{S}_n^{i_2\,j_2} & = & \#\{\sigma \in \mathcal{S}_n\ |\ \sigma(i_2)=j_1, \sigma(j_2)=i_1\} \\
& +  &  \#\{\sigma \in \mathcal{S}_n\ |\ \sigma(i_2)=j_1, \sigma(j_2) \neq i_1, \sigma^{-1}(j_1) < \sigma^{-1}(i_1),\sigma(i_2) > \sigma(j_2)\}  \\
& +  &   \#\{\sigma \in \mathcal{S}_n\ |\ \sigma(i_2) \neq j_1, \sigma(j_2)=i_1, \sigma^{-1}(j_1) < \sigma^{-1}(i_1), \sigma(i_2) > \sigma(j_2)\}  \\
& +  &   \#\{\sigma \in \mathcal{S}_n\ |\ \sigma(j_2)=j_1, \sigma^{-1}(j_1) < \sigma^{-1}(i_1), \sigma(i_2) > \sigma(j_2)\} \\
& + &   \#\{\sigma \in \mathcal{S}_n\ |\ \sigma(i_2)=i_1, \sigma^{-1}(j_1) < \sigma^{-1}(i_1), \sigma(i_2) > \sigma(j_2)\}  \\
& +  &  \#\{\sigma \in \mathcal{S}_n\ |\ \sigma(i_2) \notin \{i_1,j_1\}, \sigma(j_2) \notin \{i_1,j_1\}, \sigma^{-1}(j_1) < \sigma^{-1}(i_1), \sigma(i_2) > \sigma(j_2)\}  \\
& = &  (n-2)!  \\
& + &   (j_1-2)(n-i_2-1)(n-3)! \\
& + &   (n-i_1-1)(j_2-2)(n-3)! \\
& + &   (n-j_1)(n-j_2)(n-3)! \\
& + & (i_1-1)(i_2-1)(n-3)!  \\ 
& + & \binom{n-2}{2}\binom{n-2}{2}(n-4)!  \\
& = &  (n-3)!\Big((i_1-j_1 +1)(i_2-j_2 +1) + \frac{n^3-3n^2+4n-4}{4}\Big).   
\end{array} \right. \end{equation*}

\item[(b)]  \begin{equation*}\left. \begin{array}{ccc}
\#{}^{i_i\,j_1}\mathcal{S}_n \cap \mathcal{S}_n^{j_2\,i_2} & = &  \#\{\sigma \in \mathcal{S}_n\ |\ \sigma(i_2)=i_1, \sigma^{-1}(j_1) < \sigma^{-1}(i_1),\sigma(i_2) < \sigma(j_2)\}  \\
& + &  \#\{\sigma \in \mathcal{S}_n\ |\ \sigma(j_2)=j_1, \sigma^{-1}(j_1) < \sigma^{-1}(i_1), \sigma(i_2) < \sigma(j_2)\} \\
& + &  \#\{\sigma \in \mathcal{S}_n\ |\ \sigma(i_2) \neq j_1, \sigma(j_2)=i_1, \sigma^{-1}(j_1) < \sigma^{-1}(i_1), \sigma(i_2) < \sigma(j_2)\}  \\
& + &  \#\{\sigma \in \mathcal{S}_n\ |\ \sigma(i_2)=j_1, \sigma(j_2) \neq i_1, \sigma^{-1}(j_1) < \sigma^{-1}(i_1), \sigma(i_2) < \sigma(j_2)\}  \\
& + &  \#\{\sigma \in \mathcal{S}_n\ |\ \sigma(i_2) \notin \{i_1,j_1\}, \sigma(j_2) \notin \{i_1,j_1\}, \sigma^{-1}(j_1) < \sigma^{-1}(i_1), \sigma(i_2) < \sigma(j_2)\} \\
& = &  (n-i_1-1)(i_2-1)(n-3)! \\
& + & (j_1-2)(n-j_2)(n-3)!  \\
& + &  (i_1 -1)(j_2 -2)(n-3)!  \\
& + &  (n- j_1)(n-i_2-1)(n-3)! \\
& + & \binom{n-2}{2}\binom{n-2}{2}(n-4)! \\
& = &  (n-3)!\Big((i_1-j_1 +1)(-i_2+j_2 -1) + \frac{(n+1)(n-2)^2}{4}\Big). 
\end{array} \right. \end{equation*}
\end{itemize}
\end{proof}

\noindent Let \begin{math}n \geq 4\end{math}. We define: 
\begin{displaymath}\left.\begin{array}{cccc}
\mathtt{f_2}: & \mathcal{S}_n & \rightarrow & \mathbb{R}[X_{1,2}, \dots, X_{n-1,n}]\\
   & \pi & \mapsto & \sum_{\sigma \in \mathcal{S}_n} \mathtt{inv_X}(\sigma^{-1}) \mathtt{inv_X}(\sigma \pi) 
\end{array}\right.\end{displaymath}

\newtheorem{invinv}[XMNFijI]{Lemma}
\begin{invinv}
Let \begin{math}n \geq 4\end{math}. Then: 
\begin{itemize}
\item[(a)] For \begin{math}(i_1,j_1)=(i_2,j_2)\end{math}:
\begin{itemize}
\item[\begin{math}\triangleright\end{math}] If \begin{math} \pi(i_1)<\pi(j_1) \end{math}:
\begin{displaymath}[X_{i_1,j_1}^2]\mathtt{f_2}(\pi)=  (n-3)!\chi_{i_1,j_1} \chi_{\pi(i_1),\pi(j_1)} + (n-2)!\frac{n^2-n+2}{4}.  \end{displaymath}
\item[\begin{math}\triangleright\end{math}] If \begin{math} \pi(i_1)>\pi(j_1) \end{math}:
\begin{displaymath}[X_{i_1,j_1}^2]\mathtt{f_2}(\pi) =  (n-2)!\frac{n^2-n-2}{4} - (n-3)!\chi_{i_1,j_1} \chi_{\pi(j_1),\pi(i_1)}. \end{displaymath}
\end{itemize}

\item[(b)] For \begin{math}(i_1,j_1) \neq (i_2,j_2)\end{math}:
\begin{itemize}
\item[\begin{math}\triangleright\end{math}] If \begin{math}\pi(i_1) < \pi(j_1)\end{math} and \begin{math}\pi(i_2) < \pi(j_2)\end{math}:
\begin{displaymath}[X_{i_1,j_1}X_{i_2,j_2}]\mathtt{f_2}(\pi) = (n-3)!\big(\chi_{i_1,j_1} \chi_{\pi(i_2),\pi(j_2)} + \chi_{\pi(i_1),\pi(j_1)} \chi_{i_2,j_2} \big)+ (n-2)!\frac{n^2-n+2}{2}.\end{displaymath}
\item[\begin{math}\triangleright\end{math}] If \begin{math}\pi(i_1) < \pi(j_1)\end{math} and \begin{math}\pi(i_2) > \pi(j_2)\end{math}:
\begin{displaymath}[X_{i_1,j_1}X_{i_2,j_2}]\mathtt{f_2}(\pi) = (n-3)!\big(\chi_{\pi(i_1),\pi(j_1)} \chi_{i_2,j_2} - \chi_{i_1,j_1} \chi_{\pi(j_2),\pi(i_2)} \big)+ \frac{n!}{2}.\end{displaymath}
\item[\begin{math}\triangleright\end{math}] If \begin{math}\pi(i_1) > \pi(j_1)\end{math} and \begin{math}\pi(i_2) > \pi(j_2)\end{math}:
\begin{displaymath}[X_{i_1,j_1}X_{i_2,j_2}]\mathtt{f_2}(\pi) = (n-2)!\frac{n^2-n-2}{2} - (n-3)!\big(\chi_{i_1,j_1} \chi_{\pi(j_2),\pi(i_2)} + \chi_{\pi(j_1),\pi(i_1)} \chi_{i_2,j_2} \big).\end{displaymath}
\end{itemize}
\end{itemize}
\end{invinv}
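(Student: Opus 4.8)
The plan is to read off each coefficient of $\mathtt{f_2}(\pi)=\sum_{\sigma\in\mathcal{S}_n}\mathtt{inv_X}(\sigma^{-1})\,\mathtt{inv_X}(\sigma\pi)$ as a cardinality and then quote Lemma 4.2. First I would expand both factors over pairs: by the definitions of $\mathtt{inv_X}$ and of ${}^{i\,j}\mathcal{S}_n$, for $i<j$ the variable $X_{i,j}$ occurs in $\mathtt{inv_X}(\sigma^{-1})$ exactly when $\sigma\in{}^{i\,j}\mathcal{S}_n$, while $X_{i,j}$ occurs in $\mathtt{inv_X}(\sigma\pi)$ exactly when $(\sigma\pi)(i)>(\sigma\pi)(j)$, i.e. $\sigma(\pi(i))>\sigma(\pi(j))$. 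Since each $\mathtt{inv_X}$ is multilinear (square-free) in the variables, multiplying the two factors and summing over $\sigma$ shows that the coefficient of any monomial is simply the number of $\sigma$ realizing the two associated inversion events simultaneously.

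The key reformulation is that the event coming from the second factor lands among the sets handled by Lemma 4.2: the condition $\sigma(\pi(i))>\sigma(\pi(j))$ is $\sigma\in\mathcal{S}_n^{\pi(i)\,\pi(j)}$ when $\pi(i)<\pi(j)$, and $\sigma\in\mathcal{S}_n^{\pi(j)\,\pi(i)}$ (now with $\pi(j)<\pi(i)$) when $\pi(i)>\pi(j)$. For part (a) the monomial $X_{i_1,j_1}^2$ can only arise from $X_{i_1,j_1}$ in each factor, so $[X_{i_1,j_1}^2]\mathtt{f_2}(\pi)=\#\big({}^{i_1\,j_1}\mathcal{S}_n\cap\{\sigma:\sigma(\pi(i_1))>\sigma(\pi(j_1))\}\big)$; applying Lemma 4.2(a) when $\pi(i_1)<\pi(j_1)$, and Lemma 4.2(b) with $i_2=\pi(j_1)$, $j_2=\pi(i_1)$ when $\pi(i_1)>\pi(j_1)$, yields the two displayed formulas at once.

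For part (b) the essential point is that, since $X_{i_1,j_1}X_{i_2,j_2}=X_{i_2,j_2}X_{i_1,j_1}$ with $(i_1,j_1)\neq(i_2,j_2)$, this monomial receives two contributions: one with $X_{i_1,j_1}$ taken from $\mathtt{inv_X}(\sigma^{-1})$ and $X_{i_2,j_2}$ from $\mathtt{inv_X}(\sigma\pi)$, and one with the roles of the two pairs exchanged. Thus I would write the coefficient as a sum of two cardinalities, $\#\big({}^{i_1\,j_1}\mathcal{S}_n\cap\{\sigma:\sigma(\pi(i_2))>\sigma(\pi(j_2))\}\big)+\#\big({}^{i_2\,j_2}\mathcal{S}_n\cap\{\sigma:\sigma(\pi(i_1))>\sigma(\pi(j_1))\}\big)$, and evaluate each term by Lemma 4.2, choosing part (a) or (b) according to the sign of $\pi(i_1)-\pi(j_1)$ and of $\pi(i_2)-\pi(j_2)$. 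Summing the two contributions then produces the stated formulas, the three listed sub-cases corresponding to the sign patterns $(+,+)$, $(+,-)$ and $(-,-)$, the pattern $(-,+)$ being the symmetric twin of $(+,-)$ with the same value. The only genuine arithmetic input is the collapse of the constant terms: $(n-2)!\frac{n^2-n+2}{4}+(n-2)!\frac{n^2-n+2}{4}=(n-2)!\frac{n^2-n+2}{2}$ and $(n-2)!\frac{n^2-n-2}{4}+(n-2)!\frac{n^2-n-2}{4}=(n-2)!\frac{n^2-n-2}{2}$ in the pure cases, while in the mixed case $(n-2)!\frac{n^2-n+2}{4}+(n-2)!\frac{n^2-n-2}{4}=\frac{n!}{2}$.

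I expect the main obstacle to be bookkeeping rather than conceptual: one must keep the indices normalized so that every set fed into Lemma 4.2 has the exact shape ${}^{i_1\,j_1}\mathcal{S}_n\cap\mathcal{S}_n^{i_2\,j_2}$ or ${}^{i_1\,j_1}\mathcal{S}_n\cap\mathcal{S}_n^{j_2\,i_2}$ with both index pairs increasing, and correctly record which factor $\chi$ (namely $\chi_{\pi(i),\pi(j)}$ versus $\chi_{\pi(j),\pi(i)}$) survives after the swap. Getting these substitutions right, and remembering the doubling in part (b), is what makes the signs and the coefficient $\frac{n!}{2}$ come out exactly as stated.
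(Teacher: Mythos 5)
Your proposal is correct and follows essentially the same route as the paper: the paper's proof consists precisely of the two identities $[X_{i_1,j_1}^2]\mathtt{f_2}(\pi)= \#\,{}^{i_1\,j_1}\mathcal{S}_n \cap \mathcal{S}_n^{\pi(i_1)\,\pi(j_1)}$ and $[X_{i_1,j_1}X_{i_2,j_2}]\mathtt{f_2}(\pi)= \#\,{}^{i_1\,j_1}\mathcal{S}_n \cap \mathcal{S}_n^{\pi(i_2)\,\pi(j_2)} + \#\,{}^{i_2\,j_2}\mathcal{S}_n \cap \mathcal{S}_n^{\pi(i_1)\,\pi(j_1)}$, with the case split and the application of Lemma 4.2 left implicit. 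You merely spell out the index normalization and the arithmetic on the constant terms, which the paper omits.
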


\begin{proof}
\begin{itemize}
\item[(a)] For \begin{math}(i_1,j_1)=(i_2,j_2)\end{math}: \begin{displaymath}[X_{i_1,j_1}^2]\mathtt{f_2}(\pi)= \#{}^{i_1\,j_1}\mathcal{S}_n \cap \mathcal{S}_n^{\pi(i_1)\,\pi(j_1)}.\end{displaymath}
\item[(b)] For \begin{math}(i_1,j_1) \neq (i_2,j_2)\end{math}: \begin{displaymath}[X_{i_1,j_1}X_{i_2,j_2}]\mathtt{f_2}(\pi)= \#{}^{i_1\,j_1}\mathcal{S}_n \cap \mathcal{S}_n^{\pi(i_2)\,\pi(j_2)} + \#{}^{i_2\,j_2}\mathcal{S}_n \cap \mathcal{S}_n^{\pi(i_1)\,\pi(j_1)}.\end{displaymath}
\end{itemize}
\end{proof}

\newtheorem{prel3}[XMNFijI]{Lemma}
\begin{prel3}
Let \begin{math}n \geq 4\end{math} and \begin{math}i_1, j_1, i_2, j_2, i_3, j_3 \in [n]\end{math} with \begin{math}i_1 < j_1\end{math}, \begin{math}i_2 < j_2\end{math} and \begin{math}i_3 < j_3\end{math}. Then:
\begin{itemize}
\item[(a)] \begin{displaymath}\sum_{a,b \in [n]} \# {}^{i_1\,j_1}\mathcal{S}_n \cap \mathcal{S}_n^{a\,b} \times \mathcal{S}_n^{i_3\,j_3}{}_{|\bar{i_2}=a,\bar{j_2}=b} = \frac{n!^2}{8} - \frac{(n-2)!^2}{2} - (n-4)(n-3)!^2\chi_{i_1,j_1} \chi_{i_2,j_2} \chi_{i_3,j_3}\end{displaymath}
\begin{displaymath}- (n-3)!(n-2)! \Big(\chi_{i_1,j_1} \chi_{i_2,j_2} + \chi_{i_1,j_1} \chi_{i_3,j_3} + \chi_{i_2,j_2} \chi_{i_3,j_3} \Big).\end{displaymath}
\item[(b)] \begin{displaymath}\sum_{a,b \in [n]} \# {}^{i_1\,j_1}\mathcal{S}_n \cap \mathcal{S}_n^{a\,b} \times \mathcal{S}_n^{j_3\,i_3}{}_{|\bar{i_2}=a,\bar{j_2}=b} = \frac{n!^2}{8} + \frac{(n-2)!^2}{2} + (n-4)(n-3)!^2\chi_{i_1,j_1} \chi_{i_2,j_2} \chi_{i_3,j_3}\end{displaymath}
\begin{displaymath}+ (n-3)!(n-2)! \Big(\chi_{i_1,j_1} \chi_{i_2,j_2} + \chi_{i_1,j_1} \chi_{i_3,j_3} + \chi_{i_2,j_2} \chi_{i_3,j_3} \Big).\end{displaymath}
\end{itemize} 
\end{prel3}

\begin{proof} Using Lemma 4.1 and Lemma 4.2, we get:
\begin{itemize}
\item[(a)]  \begin{equation*}\left. \begin{array}{ccc}
\sum_{a,b \in [n]} {}^{i_1\,j_1}\mathcal{S}_n \cap \mathcal{S}_n^{a\,b} \times \mathcal{S}_n^{i_3\,j_3}{}_{|\bar{i_2}=a,\bar{j_2}=b} & = & \# {}^{i_1\,j_1}\mathcal{S}_n \cap \mathcal{S}_n^{j_3\,i_3} \times \mathcal{S}_n^{i_3\,j_3}{}_{|\bar{i_2}=j_3,\bar{j_2}=i_3}\\
& + & \# \bigcup_{a,b \notin \{i_3,j_3\}} {}^{i_1\,j_1}\mathcal{S}_n \cap \mathcal{S}_n^{a\,b} \times \mathcal{S}_n^{i_3\,j_3}{}_{|\bar{i_2}=a,\bar{j_2}=b} \\
& + & \# \bigcup_{b < i_3} {}^{i_1\,j_1}\mathcal{S}_n \cap \mathcal{S}_n^{i_3\,b} \times \mathcal{S}_n^{i_3\,j_3}{}_{|\bar{i_2}=i_3,\bar{j_2}=b} \\
& + & \# \bigcup_{\begin{subarray}{l} b > i_3 \\ b \neq j_3 \end{subarray}} {}^{i_1\,j_1}\mathcal{S}_n \cap \mathcal{S}_n^{i_3\,b} \times \mathcal{S}_n^{i_3\,j_3}{}_{|\bar{i_2}=i_3,\bar{j_2}=b}\\
& + & \# \bigcup_{\begin{subarray}{l} b < j_3 \\ b \neq i_3 \end{subarray}} {}^{i_1\,j_1}\mathcal{S}_n \cap \mathcal{S}_n^{i_3\,b} \times \mathcal{S}_n^{i_3\,j_3}{}_{|\bar{i_2}=i_3,\bar{j_2}=b}\\
& + & \# \bigcup_{b > j_3} {}^{i_1\,j_1}\mathcal{S}_n \cap \mathcal{S}_n^{j_3\,b} \times \mathcal{S}_n^{i_3\,j_3}{}_{|\bar{i_2}=j_3,\bar{j_2}=b}\\
& + & \# \bigcup_{a < i_3} {}^{i_1\,j_1}\mathcal{S}_n \cap \mathcal{S}_n^{a\,i_3} \times \mathcal{S}_n^{i_3\,j_3}{}_{|\bar{i_2}=a,\bar{j_2}=i_3} \\
& + & \#\bigcup_{\begin{subarray}{l} a > i_3 \\ a \neq j_3 \end{subarray}} {}^{i_1\,j_1}\mathcal{S}_n \cap \mathcal{S}_n^{a\,i_3} \times \mathcal{S}_n^{i_3\,j_3}{}_{|\bar{i_2}=a,\bar{j_2}=i_3}\\
& + & \# \bigcup_{\begin{subarray}{l} a < j_3 \\ a \neq i_3 \end{subarray}} {}^{i_1\,j_1}\mathcal{S}_n \cap \mathcal{S}_n^{a\,j_3} \times \mathcal{S}_n^{i_3\,j_3}{}_{|\bar{i_2}=a,\bar{j_2}=j_3}\\
& + & \# \bigcup_{a > j_3} {}^{i_1\,j_1}\mathcal{S}_n \cap \mathcal{S}_n^{a\,j_3} \times \mathcal{S}_n^{i_3\,j_3}{}_{|\bar{i_2}=a,\bar{j_2}=j_3}
\end{array} \right. \end{equation*}
\begin{equation*}\left. \begin{array}{cc}
= &  (n-2)!^2\frac{n^2-n-2}{4} - (n-3)!(n-2)!\chi_{i_1,j_1} \chi_{i_3,j_3}\\
+ &  \binom{n-2}{2}\frac{(n-2)!n!}{4} \\
+ &  (i_2-1)(i_3-1)(n-3)!(n-2)!\frac{n^2-n-2}{4} - (i_2-1)\binom{i_3-1}{2}(n-3)!^2\chi_{i_1,j_1}\\
+ &  (i_2-1)\binom{n-i_3}{2}(n-3)!^2\chi_{i_1,j_1} - (i_2-1)(n-3)!^2\chi_{i_1,j_1}\chi_{i_3,j_3}\\
& + (i_2-1)(n-i_3-1)(n-3)!(n-2)!\frac{n^2-n+2}{4}\\ 
+ &  (n-i_2-1)(j_3-2)(n-3)!(n-2)!\frac{n^2-n-2}{4} - (n-i_2-1)\binom{j_3-1}{2}(n-3)!^2\chi_{i_1,j_1}\\
& + (n-i_2-1)(n-3)!^2\chi_{i_1,j_1}\chi_{i_3,j_3}\\
+ &  (n-i_2-1)\binom{n-j_3}{2}(n-3)!^2\chi_{i_1,j_1} + (n-i_2-1)(n-j_3)(n-3)!(n-2)!\frac{n^2-n+2}{4}\\
+ &  (j_2-2)\binom{i_3-1}{2}(n-3)!^2\chi_{i_1,j_1} + (j_2-2)(i_3-1)(n-3)!(n-2)!\frac{n^2-n+2}{4}\\
+ &  (j_2-2)(n-i_3-1)(n-3)!(n-2)!\frac{n^2-n-2}{4} - (j_2-2)\binom{n-i_3}{2}(n-3)!^2\chi_{i_1,j_1}\\
& + (j_2-2)(n-3)!^2\chi_{i_1,j_1}\chi_{i_3,j_3}\\
+ &  (n-j_2)\binom{j_3-1}{2}(n-3)!^2\chi_{i_1,j_1} - (n-j_2)(n-3)!^2\chi_{i_1,j_1}\chi_{i_3,j_3}\\
& + (n-j_2)(j_3-2)(n-3)!(n-2)!\frac{n^2-n+2}{4}\\
+ &  (n-j_2)(n-j_3)(n-3)!(n-2)!\frac{n^2-n-2}{4} - (n-j_2)\binom{n-j_3}{2}(n-3)!^2\chi_{i_1,j_1}\\
= & \frac{n!^2}{8} - \frac{(n-2)!^2}{2} - (n-4)(n-3)!^2\chi_{i_1,j_1} \chi_{i_2,j_2} \chi_{i_3,j_3}\\
& - (n-3)!(n-2)! \Big(\chi_{i_1,j_1} \chi_{i_2,j_2} + \chi_{i_1,j_1} \chi_{i_3,j_3} + \chi_{i_2,j_2} \chi_{i_3,j_3} \Big).
\end{array} \right. \end{equation*}

\item[(b)]  \begin{equation*}\left. \begin{array}{ccc}
\sum_{a,b \in [n]} {}^{i_1\,j_1}\mathcal{S}_n \cap \mathcal{S}_n^{a\,b} \times \mathcal{S}_n^{j_3\,i_3}{}_{|\bar{i_2}=a,\bar{j_2}=b} & = &  \# {}^{i_1\,j_1}\mathcal{S}_n \cap \mathcal{S}_n^{i_3\,j_3} \times \mathcal{S}_n^{j_3\,i_3}{}_{|\bar{i_2}=i_3,\bar{j_2}=j_3} \\
& + &  \# \bigcup_{a,b \notin \{i_3,j_3\}} {}^{i_1\,j_1}\mathcal{S}_n \cap \mathcal{S}_n^{a\,b} \times \mathcal{S}_n^{j_3\,i_3}{}_{|\bar{i_2}=a,\bar{j_2}=b}   \\
& + &  \# \bigcup_{b < i_3} {}^{i_1\,j_1}\mathcal{S}_n \cap \mathcal{S}_n^{i_3\,b} \times \mathcal{S}_n^{j_3\,i_3}{}_{|\bar{i_2}=i_3,\bar{j_2}=b}   \\
& + & \# \bigcup_{\begin{subarray}{l} b > i_3 \\ b \neq j_3 \end{subarray}} {}^{i_1\,j_1}\mathcal{S}_n \cap \mathcal{S}_n^{i_3\,b} \times \mathcal{S}_n^{j_3\,i_3}{}_{|\bar{i_2}=i_3,\bar{j_2}=b}    \\
& + &  \# \bigcup_{\begin{subarray}{l} b < j_3 \\ b \neq i_3 \end{subarray}} {}^{i_1\,j_1}\mathcal{S}_n \cap \mathcal{S}_n^{i_3\,b} \times \mathcal{S}_n^{j_3\,i_3}{}_{|\bar{i_2}=i_3,\bar{j_2}=b}   \\
& + &  \# \bigcup_{b > j_3} {}^{i_1\,j_1}\mathcal{S}_n \cap \mathcal{S}_n^{j_3\,b} \times \mathcal{S}_n^{j_3\,i_3}{}_{|\bar{i_2}=j_3,\bar{j_2}=b}    \\
& + & \# \bigcup_{a < i_3} {}^{i_1\,j_1}\mathcal{S}_n \cap \mathcal{S}_n^{a\,i_3} \times \mathcal{S}_n^{j_3\,i_3}{}_{|\bar{i_2}=a,\bar{j_2}=i_3}    \\
& + &  \# \bigcup_{\begin{subarray}{l} a > i_3 \\ a \neq j_3 \end{subarray}} {}^{i_1\,j_1}\mathcal{S}_n \cap \mathcal{S}_n^{a\,i_3} \times \mathcal{S}_n^{j_3\,i_3}{}_{|\bar{i_2}=a,\bar{j_2}=i_3}   \\
& + &  \# \bigcup_{\begin{subarray}{l} a < j_3 \\ a \neq i_3 \end{subarray}} {}^{i_1\,j_1}\mathcal{S}_n \cap \mathcal{S}_n^{a\,j_3} \times \mathcal{S}_n^{j_3\,i_3}{}_{|\bar{i_2}=a,\bar{j_2}=j_3}   \\
& + &  \# \bigcup_{a > j_3} {}^{i_1\,j_1}\mathcal{S}_n \cap \mathcal{S}_n^{a\,j_3} \times \mathcal{S}_n^{j_3\,i_3}{}_{|\bar{i_2}=a,\bar{j_2}=j_3}   \\
\end{array} \right. \end{equation*}
\begin{equation*}\left. \begin{array}{cc}
= &  (n-3)!(n-2)!\chi_{i_1,j_1} \chi_{i_3,j_3} + (n-2)!^2\frac{n^2-n+2}{4}\\
+ & \binom{n-2}{2}\frac{(n-2)!n!}{4}  \\
+ &  (n-i_2-1)(i_3-1)(n-3)!(n-2)!\frac{n^2-n-2}{4} - (n-i_2-1)\binom{i_3-1}{2}(n-3)!^2\chi_{i_1,j_1}  \\
+ &  (n-i_2-1)\binom{n-i_3}{2}(n-3)!^2\chi_{i_1,j_1} - (n-i_2-1)(n-3)!^2\chi_{i_1,j_1}\chi_{i_3,j_3} \\
& + (n-i_2-1)(n-i_3-1)(n-3)!(n-2)!\frac{n^2-n+2}{4}  \\
+ &  (i_2-1)(j_3-2)(n-3)!(n-2)!\frac{n^2-n-2}{4} - (i_2-1)\binom{j_3-1}{2}(n-3)!^2\chi_{i_1,j_1}+ (i_2-1)(n-3)!^2\chi_{i_1,j_1}\chi_{i_3,j_3} \\
+ &  (i_2-1)\binom{n-j_3}{2}(n-3)!^2\chi_{i_1,j_1} + (i_2-1)(n-j_3)(n-3)!(n-2)!\frac{n^2-n+2}{4} \\
+ &   (n-j_2)\binom{i_3-1}{2}(n-3)!^2\chi_{i_1,j_1} + (n-j_2)(i_3-1)(n-3)!(n-2)!\frac{n^2-n+2}{4} \\
+ &  (n-j_2)(n-i_3-1)(n-3)!(n-2)!\frac{n^2-n-2}{4} - (n-j_2)\binom{n-i_3}{2}(n-3)!^2\chi_{i_1,j_1}\\
& + (n-j_2)(n-3)!^2\chi_{i_1,j_1}\chi_{i_3,j_3}  \\
+ &  (j_2-2)\binom{j_3-1}{2}(n-3)!^2\chi_{i_1,j_1} - (j_2-2)(n-3)!^2\chi_{i_1,j_1}\chi_{i_3,j_3}+ (j_2-2)(j_3-2)(n-3)!(n-2)!\frac{n^2-n+2}{4}  \\
+ &  (j_2-2)(n-j_3)(n-3)!(n-2)!\frac{n^2-n-2}{4} - (j_2-2)\binom{n-j_3}{2}(n-3)!^2\chi_{i_1,j_1}  \\
= &  \frac{n!^2}{8} + \frac{(n-2)!^2}{2} + (n-4)(n-3)!^2\chi_{i_1,j_1} \chi_{i_2,j_2} \chi_{i_3,j_3}  \\
 & + (n-3)!(n-2)! \Big(\chi_{i_1,j_1} \chi_{i_2,j_2} + \chi_{i_1,j_1} \chi_{i_3,j_3} + \chi_{i_2,j_2} \chi_{i_3,j_3} \Big)  
\end{array} \right. \end{equation*}
\end{itemize}
\end{proof}

\noindent Let \begin{math}n \geq 4\end{math}. We define: 
\begin{displaymath}\left.\begin{array}{cccc}
\mathtt{f_3}: & \mathcal{S}_n & \rightarrow & \mathbb{R}[X_{1,2}, \dots, X_{n-1,n}]\\
   & \pi & \mapsto & \sum_{\sigma, \tau \in \mathcal{S}_n} \mathtt{inv_X}(\sigma^{-1}) \mathtt{inv_X}(\sigma \tau^{-1}) \mathtt{inv_X}(\tau \pi).        
\end{array}\right.\end{displaymath}

\newtheorem{invinvinv}[XMNFijI]{Lemma}
\begin{invinvinv}
Let \begin{math}n \geq 4\end{math}. Then: 
\begin{itemize}
\item[(a)] For \begin{math}(i_1,j_1)=(i_2,j_2)=(i_3,j_3)\end{math}: 
\begin{itemize}
\item[\begin{math}\triangleright\end{math}] If \begin{math}\pi(i_1) < \pi(j_1)\end{math}: 
\begin{displaymath}[X_{i_1,j_1}^3]\mathtt{f_3}(\pi)= \frac{n!^2}{8} - \frac{(n-2)!^2}{2} - (n-4)(n-3)!^2\chi_{i_1,j_1}^2\chi_{\pi(i_1),\pi(j_1)}\end{displaymath} \begin{displaymath}-(n-3)!(n-2)!\chi_{i_1,j_1}^2 -2(n-3)!(n-2)!\chi_{i_1,j_1}\chi_{\pi(i_1),\pi(j_1)}.\end{displaymath}
\item[\begin{math}\triangleright\end{math}] If \begin{math}\pi(i_1) > \pi(j_1)\end{math}:
\begin{displaymath}[X_{i_1,j_1}^3]\mathtt{f_3}(\pi)= \frac{n!^2}{8} + \frac{(n-2)!^2}{2} + (n-4)(n-3)!^2\chi_{i_1,j_1}^2\chi_{\pi(j_1),\pi(i_1)}\end{displaymath} \begin{displaymath}+(n-3)!(n-2)!\chi_{i_1,j_1}^2 + 2(n-3)!(n-2)!\chi_{i_1,j_1}\chi_{\pi(j_1),\pi(i_1)}.\end{displaymath} 
\end{itemize}
\item[(b)] For \begin{math}(i_1,j_1)=(i_2,j_2) \neq (i_3,j_3)\end{math}:
\begin{itemize}
\item[\begin{math}\triangleright\end{math}] If \begin{math}\pi(i_1) < \pi(j_1)\end{math} and \begin{math}\pi(i_3) < \pi(j_3)\end{math}: 
\begin{displaymath}[X_{i_1,j_1}^2X_{i_3,j_3}]\mathtt{f_3}(\pi)= \frac{3n!^2}{8} - \frac{3(n-2)!^2}{2}- (n-4)(n-3)!^2\big(\chi_{i_1,j_1}^2\chi_{\pi(i_3),\pi(j_3)} + 2\chi_{i_1,j_1}\chi_{i_3,j_3}\chi_{\pi(i_1),\pi(j_1)} \big)\end{displaymath}  \begin{displaymath}- (n-3)!(n-2)!\big(\chi_{i_1,j_1}^2 + 2\chi_{i_1,j_1}\chi_{\pi(i_3),\pi(j_3)} + 2\chi_{i_1,j_1}\chi_{i_3,j_3} + 2\chi_{i_1,j_1}\chi_{\pi(i_1),\pi(j_1)} + 2\chi_{i_3,j_3}\chi_{\pi(i_1),\pi(j_1)}\big).\end{displaymath}
\item[\begin{math}\triangleright\end{math}] If \begin{math}\pi(i_1) < \pi(j_1)\end{math} and \begin{math}\pi(i_3) > \pi(j_3)\end{math}: 
\begin{displaymath}[X_{i_1,j_1}^2X_{i_3,j_3}]\mathtt{f_3}(\pi)= \frac{3n!^2}{8} - \frac{(n-2)!^2}{2}+ (n-4)(n-3)!^2\big( \chi_{i_1,j_1}^2\chi_{\pi(j_3),\pi(i_3)} - 2\chi_{i_1,j_1}\chi_{i_3,j_3}\chi_{\pi(i_1),\pi(j_1)} \big)\end{displaymath}
\begin{displaymath}+ (n-3)!(n-2)!\big(\chi_{i_1,j_1}^2 + 2\chi_{i_1,j_1}\chi_{\pi(j_3),\pi(i_3)} - 2\chi_{i_1,j_1}\chi_{i_3,j_3} - 2\chi_{i_1,j_1}\chi_{\pi(i_1),\pi(j_1)} - 2\chi_{i_3,j_3}\chi_{\pi(i_1),\pi(j_1)}\big).\end{displaymath}
\item[\begin{math}\triangleright\end{math}] If \begin{math}\pi(i_1) > \pi(j_1)\end{math} and \begin{math}\pi(i_3) < \pi(j_3)\end{math}:
\begin{displaymath}[X_{i_1,j_1}^2X_{i_3,j_3}]\mathtt{f_3}(\pi)= \frac{3n!^2}{8} + \frac{(n-2)!^2}{2}- (n-4)(n-3)!^2\big( \chi_{i_1,j_1}^2\chi_{\pi(i_3),\pi(j_3)} - 2\chi_{i_1,j_1}\chi_{i_3,j_3}\chi_{\pi(j_1),\pi(i_1)} \big)\end{displaymath}
\begin{displaymath}- (n-3)!(n-2)!\big(\chi_{i_1,j_1}^2 + 2\chi_{i_1,j_1}\chi_{\pi(i_3),\pi(j_3)} - 2\chi_{i_1,j_1}\chi_{i_3,j_3} - 2\chi_{i_1,j_1}\chi_{\pi(j_1),\pi(i_1)} - 2\chi_{i_3,j_3}\chi_{\pi(j_1),\pi(i_1)}\big).\end{displaymath}
\item[\begin{math}\triangleright\end{math}] If \begin{math}\pi(i_1) > \pi(j_1)\end{math} and \begin{math}\pi(i_3) > \pi(j_3)\end{math}:
\begin{displaymath}[X_{i_1,j_1}^2X_{i_3,j_3}\mathtt{f_3}(\pi)= \frac{3n!^2}{8} + \frac{3(n-2)!^2}{2}+ (n-4)(n-3)!^2\big(\chi_{i_1,j_1}^2\chi_{\pi(j_3),\pi(i_3)} + 2\chi_{i_1,j_1}\chi_{i_3,j_3}\chi_{\pi(j_1),\pi(i_1)} \big)\end{displaymath}  \begin{displaymath}+ (n-3)!(n-2)!\big(\chi_{i_1,j_1}^2 + 2\chi_{i_1,j_1}\chi_{\pi(j_3),\pi(i_3)} + 2\chi_{i_1,j_1}\chi_{i_3,j_3} + 2\chi_{i_1,j_1}\chi_{\pi(j_1),\pi(i_1)} + 2\chi_{i_3,j_3}\chi_{\pi(j_1),\pi(i_1)}\big).\end{displaymath}
\end{itemize}
\item[(c)] For \begin{math}(i_1,j_1) \neq (i_2,j_2) \neq (i_3,j_3)\end{math}:
\begin{itemize}
\item[\begin{math}\triangleright\end{math}] If \begin{math}\pi(i_1) < \pi(j_1)\end{math}, \begin{math}\pi(i_2) < \pi(j_2)\end{math} and \begin{math}\pi(i_3) < \pi(j_3)\end{math}:
\begin{displaymath}[X_{i_1,j_1}X_{i_2,j_2}X_{i_3,j_3}]\mathtt{f_3}(\pi)= \frac{3n!^2}{4} - 3(n-2)!^2\end{displaymath}
\begin{displaymath}- 2(n-4)(n-3)!^2\big(\chi_{i_1,j_1}\chi_{i_2,j_2}\chi_{\pi(i_3),\pi(j_3)} + \chi_{i_1,j_1}\chi_{\pi(i_2),\pi(j_2)}\chi_{i_3,j_3} + \chi_{\pi(i_1),\pi(j_1)}\chi_{i_2,j_2}\chi_{i_3,j_3}\big)\end{displaymath}
\begin{displaymath}- 2(n-3)!(n-2)!\big(\chi_{i_1,j_1}\chi_{i_2,j_2} + \chi_{i_1,j_1}\chi_{\pi(i_3),\pi(j_3)} + \chi_{i_2,j_2}\chi_{\pi(i_3),\pi(j_3)}\end{displaymath}
\begin{displaymath}+ \chi_{i_1,j_1}\chi_{i_3,j_3} + \chi_{i_1,j_1}\chi_{\pi(i_2),\pi(j_2)} + \chi_{i_3,j_3}\chi_{\pi(i_2),\pi(j_2)}+ \chi_{i_2,j_2}\chi_{i_3,j_3} + \chi_{i_2,j_2}\chi_{\pi(i_1),\pi(j_1)} + \chi_{i_3,j_3}\chi_{\pi(i_1),\pi(j_1)}  \big).\end{displaymath}
\item[\begin{math}\triangleright\end{math}] If \begin{math}\pi(i_1) < \pi(j_1)\end{math}, \begin{math}\pi(i_2) < \pi(j_2)\end{math} and \begin{math}\pi(i_3) > \pi(j_3)\end{math}: 
\begin{displaymath}[X_{i_1,j_1}X_{i_2,j_2}X_{i_3,j_3}]\mathtt{f_3}(\pi)= \frac{3n!^2}{4} - (n-2)!^2\end{displaymath}
\begin{displaymath}+ 2(n-4)(n-3)!^2\big(\chi_{i_1,j_1}\chi_{i_2,j_2}\chi_{\pi(j_3),\pi(i_3)} - \chi_{i_1,j_1}\chi_{\pi(i_2),\pi(j_2)}\chi_{i_3,j_3} - \chi_{\pi(i_1),\pi(j_1)}\chi_{i_2,j_2}\chi_{i_3,j_3}\big)\end{displaymath}
\begin{displaymath}+ 2(n-3)!(n-2)!\big(\chi_{i_1,j_1}\chi_{i_2,j_2} + \chi_{i_1,j_1}\chi_{\pi(j_3),\pi(i_3)} + \chi_{i_2,j_2}\chi_{\pi(j_3),\pi(i_3)}\end{displaymath}
\begin{displaymath}- \chi_{i_1,j_1}\chi_{i_3,j_3} - \chi_{i_1,j_1}\chi_{\pi(i_2),\pi(j_2)} - \chi_{i_3,j_3}\chi_{\pi(i_2),\pi(j_2)}- \chi_{i_2,j_2}\chi_{i_3,j_3} - \chi_{i_2,j_2}\chi_{\pi(i_1),\pi(j_1)} - \chi_{i_3,j_3}\chi_{\pi(i_1),\pi(j_1)}  \big).\end{displaymath}
\item[\begin{math}\triangleright\end{math}] If \begin{math}\pi(i_1) < \pi(j_1)\end{math}, \begin{math}\pi(i_2) > \pi(j_2)\end{math} and \begin{math}\pi(i_3) > \pi(j_3)\end{math}:
\begin{displaymath}[X_{i_1,j_1}X_{i_2,j_2}X_{i_3,j_3}]\mathtt{f_3}(\pi)= \frac{3n!^2}{4} + (n-2)!^2\end{displaymath}
\begin{displaymath}+ 2(n-4)(n-3)!^2\big(\chi_{i_1,j_1}\chi_{i_2,j_2}\chi_{\pi(j_3),\pi(i_3)} + \chi_{i_1,j_1}\chi_{\pi(j_2),\pi(i_2)}\chi_{i_3,j_3} - \chi_{\pi(i_1),\pi(j_1)}\chi_{i_2,j_2}\chi_{i_3,j_3}\big)\end{displaymath}
\begin{displaymath}+ 2(n-3)!(n-2)!\big(\chi_{i_1,j_1}\chi_{i_2,j_2} + \chi_{i_1,j_1}\chi_{\pi(j_3),\pi(i_3)} + \chi_{i_2,j_2}\chi_{\pi(j_3),\pi(i_3)}\end{displaymath}
\begin{displaymath}+ \chi_{i_1,j_1}\chi_{i_3,j_3} + \chi_{i_1,j_1}\chi_{\pi(j_2),\pi(i_2)} + \chi_{i_3,j_3}\chi_{\pi(j_2),\pi(i_2)}- \chi_{i_2,j_2}\chi_{i_3,j_3} - \chi_{i_2,j_2}\chi_{\pi(i_1),\pi(j_1)} - \chi_{i_3,j_3}\chi_{\pi(i_1),\pi(j_1)}  \big).\end{displaymath}
\item[\begin{math}\triangleright\end{math}] If \begin{math}\pi(i_1) > \pi(j_1)\end{math}, \begin{math}\pi(i_2) > \pi(j_2)\end{math} and \begin{math}\pi(i_3) > \pi(j_3)\end{math}:
\begin{displaymath}[X_{i_1,j_1}X_{i_2,j_2}X_{i_3,j_3}]\mathtt{f_3}(\pi)= \frac{3n!^2}{4} + 3(n-2)!^2\end{displaymath}
\begin{displaymath}+ 2(n-4)(n-3)!^2\big(\chi_{i_1,j_1}\chi_{i_2,j_2}\chi_{\pi(j_3),\pi(i_3)} + \chi_{i_1,j_1}\chi_{\pi(j_2),\pi(i_2)}\chi_{i_3,j_3} + \chi_{\pi(j_1),\pi(i_1)}\chi_{i_2,j_2}\chi_{i_3,j_3}\big)\end{displaymath}
\begin{displaymath}+ 2(n-3)!(n-2)!\big(\chi_{i_1,j_1}\chi_{i_2,j_2} + \chi_{i_1,j_1}\chi_{\pi(j_3),\pi(i_3)} + \chi_{i_2,j_2}\chi_{\pi(j_3),\pi(i_3)}\end{displaymath}
\begin{displaymath}+ \chi_{i_1,j_1}\chi_{i_3,j_3} + \chi_{i_1,j_1}\chi_{\pi(j_2),\pi(i_2)} + \chi_{i_3,j_3}\chi_{\pi(j_2),\pi(i_2)}+ \chi_{i_2,j_2}\chi_{i_3,j_3} + \chi_{i_2,j_2}\chi_{\pi(j_1),\pi(i_1)} + \chi_{i_3,j_3}\chi_{\pi(j_1),\pi(i_1)}  \big).\end{displaymath}
\end{itemize}
\end{itemize}
\end{invinvinv}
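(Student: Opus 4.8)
The plan is to compute each monomial coefficient of $\mathtt{f_3}(\pi)$ by extracting exactly one variable from each of the three factors $\mathtt{inv_X}(\sigma^{-1})$, $\mathtt{inv_X}(\sigma\tau^{-1})$, $\mathtt{inv_X}(\tau\pi)$, and then reducing the resulting double sum over $(\sigma,\tau) \in \mathcal{S}_n \times \mathcal{S}_n$ to the quantities already evaluated in Lemma 4.4. The translation I would use is: a variable $X_{p,q}$ with $p<q$ occurs in $\mathtt{inv_X}(\sigma^{-1})$ iff $\sigma \in {}^{p\,q}\mathcal{S}_n$; it occurs in $\mathtt{inv_X}(\sigma\tau^{-1})$ iff $\sigma(\tau^{-1}(p)) > \sigma(\tau^{-1}(q))$; and it occurs in $\mathtt{inv_X}(\tau\pi)$ iff $\tau \in \mathcal{S}_n^{\pi(p)\,\pi(q)}$.

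First I would fix one assignment of the three index pairs to the three factors, say $(i_1,j_1)$ to $\mathtt{inv_X}(\sigma^{-1})$, $(i_2,j_2)$ to $\mathtt{inv_X}(\sigma\tau^{-1})$, and $(i_3,j_3)$ to $\mathtt{inv_X}(\tau\pi)$, and set $a = \tau^{-1}(i_2)$, $b = \tau^{-1}(j_2)$. Grouping the sum over $(\sigma,\tau)$ by the values $(a,b)$ turns the contribution of this assignment into
\[
\sum_{a,b \in [n]} \# {}^{i_1\,j_1}\mathcal{S}_n \cap \mathcal{S}_n^{a\,b} \times \mathcal{S}_n^{\pi(i_3)\,\pi(j_3)}{}_{|\bar{i_2}=a,\bar{j_2}=b},
\]
that is, the sum of Lemma 4.4 with the third index pair replaced by its $\pi$-image. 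If $\pi(i_3) < \pi(j_3)$ this is evaluated by Lemma 4.4(a); if $\pi(i_3) > \pi(j_3)$ the set $\mathcal{S}_n^{\pi(i_3)\,\pi(j_3)}$ is of the reversed type and its value is given by Lemma 4.4(b). Thus every single assignment contributes a closed form in which the first- and middle-factor pairs enter untransformed, while the last-factor pair enters through its $\pi$-image and dictates the (a)/(b) sign.

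Next I would sum over all assignments: the $3! = 6$ of them in case (c), the three distinct placements of $X_{i_3,j_3}$ in case (b), and the single placement in case (a). Because the $\chi$ attached to the first two slots enters symmetrically, the six assignments of case (c) collapse so that each of the three pairs occupies the $\pi$-transformed last slot in exactly two assignments; this reproduces both the overall factor $2$ and the symmetric cubic combination $\chi_{\pi(i_1),\pi(j_1)}\chi_{i_2,j_2}\chi_{i_3,j_3} + \chi_{i_1,j_1}\chi_{\pi(i_2),\pi(j_2)}\chi_{i_3,j_3} + \chi_{i_1,j_1}\chi_{i_2,j_2}\chi_{\pi(i_3),\pi(j_3)}$ of the statement, and likewise for the quadratic $(n-3)!(n-2)!$ terms. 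The constant $\frac{n!^2}{8}$ contributed by each assignment accumulates to $\frac{3n!^2}{4}$, $\frac{3n!^2}{8}$, or $\frac{n!^2}{8}$ in cases (c), (b), (a) respectively. Cases (a) and (b) follow by the same mechanism with coincident index pairs, the repeated pair producing the squared factors $\chi_{i_1,j_1}^2$ visible in the displayed formulas.

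The main obstacle is the sign and orientation bookkeeping of the $\pm\frac{(n-2)!^2}{2}$ constant and of the $\chi$-products: for each assignment one must record whether the pair in the last slot has increasing or decreasing $\pi$-image, select Lemma 4.4(a) or (b) accordingly, and then consolidate the resulting $\chi_{\pi(i_k),\pi(j_k)}$ against $\chi_{\pi(j_k),\pi(i_k)}$ factors within each of the four subcases. A convenient consistency check is the coefficient of $(n-2)!^2$, which over the six last-slot choices equals $2d-3$, where $d$ is the number of indices $k$ with $\pi(i_k) > \pi(j_k)$; this yields $-3, -1, +1, +3$ and matches the four subcases of (c), while the cubic and quadratic $\chi$-terms must be gathered in step with the same split.
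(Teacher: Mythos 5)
Your proposal is correct and follows essentially the same route as the paper: the paper's proof consists precisely of writing $[X_{i_1,j_1}X_{i_2,j_2}X_{i_3,j_3}]\mathtt{f_3}(\pi)$ as the sum over assignments of the index pairs to the three factors, each assignment giving a term $\sum_{a,b}\#\,{}^{p_1\,q_1}\mathcal{S}_n\cap\mathcal{S}_n^{a\,b}\times\mathcal{S}_n^{\pi(p_3)\,\pi(q_3)}{}_{|\bar{p_2}=a,\bar{q_2}=b}$ to be evaluated by Lemma 4.4(a) or (b) according to the orientation of the $\pi$-image of the last-slot pair. Your reduction via $a=\tau^{-1}(i_2)$, $b=\tau^{-1}(j_2)$, the count of one/three/six assignments in cases (a)/(b)/(c), and the $(2d-3)$ check on the $(n-2)!^2$ coefficient all match (and indeed spell out more than) what the paper records.
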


\begin{proof}
\begin{itemize}
\item[(a)] For \begin{math}(i_1,j_1)=(i_2,j_2)=(i_3,j_3)\end{math}: \begin{displaymath}[X_{i_1,j_1}^3]\mathtt{f_3}(\pi)= \sum_{a,b \in [n]} \# {}^{i_1\,j_1}\mathcal{S}_n \cap \mathcal{S}_n^{a\,b} \times \mathcal{S}_n^{\pi(i_1)\,\pi(j_1)}{}_{|\bar{i_1}=a,\bar{j_1}=b}.\end{displaymath}
\item[(b)] For \begin{math}(i_1,j_1)=(i_2,j_2) \neq (i_3,j_3)\end{math}:  \begin{displaymath}[X_{i_1,j_1}^2X_{i_3,j_3}]\mathtt{f_3}(\pi)= \sum_{a,b \in [n]} \# {}^{i_1\,j_1}\mathcal{S}_n \cap \mathcal{S}_n^{a\,b} \times \mathcal{S}_n^{\pi(i_3)\,\pi(j_3)}{}_{|\bar{i_1}=a,\bar{j_1}=b}\end{displaymath} \begin{displaymath}+ \sum_{a,b \in [n]} \# {}^{i_1\,j_1}\mathcal{S}_n \cap \mathcal{S}_n^{a\,b} \times \mathcal{S}_n^{\pi(i_1)\,\pi(j_1)}{}_{|\bar{i_3}=a,\bar{j_3}=b} + \sum_{a,b \in [n]} \# {}^{i_3\,j_3}\mathcal{S}_n \cap \mathcal{S}_n^{a\,b} \times \mathcal{S}_n^{\pi(i_1)\,\pi(j_1)}{}_{|\bar{i_1}=a,\bar{j_1}=b}.\end{displaymath}
\item[(c)] For \begin{math}(i_1,j_1) \neq (i_3,j_3) \neq (i_2,j_2)\end{math}:
\begin{displaymath}[X_{i_1,j_1} X_{i_2,j_2} X_{i_3,j_3}]\mathtt{f_3}(\pi)= \sum_{a,b \in [n]} \# {}^{i_1\,j_1}\mathcal{S}_n \cap \mathcal{S}_n^{a\,b} \times \mathcal{S}_n^{\pi(i_3)\,\pi(j_3)}{}_{|\bar{i_2}=a,\bar{j_2}=b}\end{displaymath} 
\begin{displaymath}+ \sum_{a,b \in [n]} \# {}^{i_2\,j_2}\mathcal{S}_n \cap \mathcal{S}_n^{a\,b} \times \mathcal{S}_n^{\pi(i_3)\,\pi(j_3)}{}_{|\bar{i_1}=a,\bar{j_1}=b}
+ \sum_{a,b \in [n]} \# {}^{i_1\,j_1}\mathcal{S}_n \cap \mathcal{S}_n^{a\,b} \times \mathcal{S}_n^{\pi(i_2)\,\pi(j_2)}{}_{|\bar{i_3}=a,\bar{j_3}=b}\end{displaymath} 
\begin{displaymath}+ \sum_{a,b \in [n]} \# {}^{i_3\,j_3}\mathcal{S}_n \cap \mathcal{S}_n^{a\,b} \times \mathcal{S}_n^{\pi(i_2)\,\pi(j_2)}{}_{|\bar{i_1}=a,\bar{j_1}=b}
+ \sum_{a,b \in [n]} \# {}^{i_2\,j_2}\mathcal{S}_n \cap \mathcal{S}_n^{a\,b} \times \mathcal{S}_n^{\pi(i_1)\,\pi(j_1)}{}_{|\bar{i_3}=a,\bar{j_3}=b}\end{displaymath}
\begin{displaymath}+ \sum_{a,b \in [n]} \# {}^{i_3\,j_3}\mathcal{S}_n \cap \mathcal{S}_n^{a\,b} \times \mathcal{S}_n^{\pi(i_1)\,\pi(j_1)}{}_{|\bar{i_2}=a,\bar{j_2}=b}.\end{displaymath} 
\end{itemize}
\end{proof}

\noindent Let \begin{math}n \geq 4\end{math}. We write
\begin{displaymath}\Lambda = (n-2)!\sum_{\{(i,j) \in [n]^2\ |\ i<j\}}(j-i)X_{i,j},\end{displaymath}
\begin{displaymath}\Delta = (n-3)!\sum_{\{(i,j) \in [n]^2\ |\ i<j\}} \Big(n-2(j-i)\Big) X_{i,j}.\end{displaymath}

\newtheorem{plus}[XMNFijI]{Lemma}
\begin{plus}
Let \begin{math}n \geq 4\end{math}. Then: 
\begin{itemize}
\item[(a)] \begin{displaymath}[X_{i,j}](\Lambda + \Delta)= 2(n-2)! + (n-4)(n-3)!\chi_{i,j}.\end{displaymath}
\item[(b)] \begin{displaymath}[X_{i_1,j_1}X_{i_2,j_2}](\Lambda \Delta) = (n-3)!(n-2)! \frac{\big( 2(n-2) + (n-4)\chi_{i_1,j_1} + (n-4)\chi_{i_2,j_2} - 4\chi_{i_1,j_1} \chi_{i_2,j_2} \big)}{1 + \delta_{ (i_1,j_1),(i_2,j_2)}}  \end{displaymath} 
\end{itemize}
\end{plus}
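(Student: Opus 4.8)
The plan is purely computational: both statements follow by expanding the linear forms $\Lambda$ and $\Delta$ coefficient by coefficient. First I would record, for a pair $(i,j)$ with $i<j$, the two coefficients $[X_{i,j}]\Lambda=(n-2)!(j-i)$ and $[X_{i,j}]\Delta=(n-3)!\big(n-2(j-i)\big)$, and rewrite them in terms of $\chi_{i,j}=j-i-1$, so that $[X_{i,j}]\Lambda=(n-2)!(\chi_{i,j}+1)$ and $[X_{i,j}]\Delta=(n-3)!\big(n-2-2\chi_{i,j}\big)$.

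For part (a) I would simply add these two expressions and collect terms. The only non-routine step is the identity $(n-3)!(n-2)=(n-2)!$; using it, the constant parts combine to $2(n-2)!$ and the $\chi_{i,j}$-parts combine to $\big((n-2)!-2(n-3)!\big)\chi_{i,j}=(n-3)!\big((n-2)-2\big)\chi_{i,j}=(n-4)(n-3)!\chi_{i,j}$, which is the claimed formula.

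For part (b) I would use the elementary fact that for two linear forms $\sum_{k<l}a_{k,l}X_{k,l}$ and $\sum_{k<l}b_{k,l}X_{k,l}$ the coefficient of $X_{i_1,j_1}X_{i_2,j_2}$ in their product equals $\frac{a_{i_1,j_1}b_{i_2,j_2}+a_{i_2,j_2}b_{i_1,j_1}}{1+\delta_{(i_1,j_1),(i_2,j_2)}}$, the factor $1+\delta$ reconciling the square case with the mixed case. Applying this with $a_{i,j}=(n-2)!(\chi_{i,j}+1)$ and $b_{i,j}=(n-3)!\big(n-2-2\chi_{i,j}\big)$ and factoring out $(n-2)!(n-3)!$, the numerator becomes $(\chi_{i_1,j_1}+1)\big(n-2-2\chi_{i_2,j_2}\big)+(\chi_{i_2,j_2}+1)\big(n-2-2\chi_{i_1,j_1}\big)$; expanding and collecting gives $2(n-2)+(n-4)\chi_{i_1,j_1}+(n-4)\chi_{i_2,j_2}-4\chi_{i_1,j_1}\chi_{i_2,j_2}$, which is precisely the asserted expression.

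I expect no genuine obstacle here; the only point that requires a moment's care is bookkeeping the $1+\delta_{(i_1,j_1),(i_2,j_2)}$ factor correctly, so that the diagonal case $(i_1,j_1)=(i_2,j_2)$ and the off-diagonal case are both captured by the single displayed formula.
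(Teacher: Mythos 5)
Your computation is correct and is exactly what the paper intends: its entire proof of this lemma is the phrase ``Coefficient calculating,'' and your expansion of $[X_{i,j}]\Lambda=(n-2)!(\chi_{i,j}+1)$ and $[X_{i,j}]\Delta=(n-3)!(n-2-2\chi_{i,j})$, together with the $1+\delta_{(i_1,j_1),(i_2,j_2)}$ bookkeeping for the square versus mixed monomials, fills in precisely the omitted routine work. No gap; same approach.
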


\begin{proof} Coefficient calculating.
\end{proof}

\noindent Let \begin{math}n \geq 4\end{math}. We define: 
\begin{displaymath}\left.\begin{array}{cccc}
\mathtt{f}: & \mathcal{S}_n & \rightarrow & \mathbb{R}[X_{1,2}, \dots, X_{n-1,n}]\\
   & \pi & \mapsto &  \Lambda \Delta \mathtt{inv_X}(\pi) + \big(\Lambda + \Delta \big) \mathtt{f_2}(\pi)  + \mathtt{f_3}(\pi)
\end{array}\right.\end{displaymath}

\noindent We write \begin{math}\iota\end{math} for the identity permutation of \begin{math}\mathcal{S}_n\end{math}.

\newtheorem{richtiges}[XMNFijI]{Lemma}
\begin{richtiges}
Let \begin{math}n \geq 4\end{math} and \begin{math}\pi \in \mathcal{S}_n\end{math}. Then:
\begin{displaymath}\mathtt{f}(\pi)= \mathtt{f}(\iota)= \big(\Lambda + \Delta \big) \mathtt{f_2}(\iota)  + \mathtt{f_3}(\iota).\end{displaymath}
\end{richtiges}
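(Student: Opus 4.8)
The plan is to prove the identity coefficient by coefficient in the variables $X_{i,j}$. First observe that $\Lambda$, $\Delta$ and $\mathtt{inv_X}(\pi)$ are homogeneous of degree $1$, that $\mathtt{f_2}(\pi)$ is homogeneous of degree $2$, and that $\mathtt{f_3}(\pi)$ is homogeneous of degree $3$; hence each of the three summands $\Lambda\Delta\,\mathtt{inv_X}(\pi)$, $(\Lambda+\Delta)\mathtt{f_2}(\pi)$ and $\mathtt{f_3}(\pi)$, and therefore $\mathtt{f}(\pi)$ itself, is homogeneous of degree $3$. Since the identity has no inversions, $\mathtt{inv_X}(\iota)=0$, so the value asserted at $\iota$ is indeed $\mathtt{f}(\iota)=(\Lambda+\Delta)\mathtt{f_2}(\iota)+\mathtt{f_3}(\iota)$. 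It thus suffices to show that for every degree-$3$ monomial $M$ the coefficient $[M]\mathtt{f}(\pi)$ is independent of $\pi$. Such monomials have one of three shapes: $X_{i_1,j_1}^3$; $X_{i_1,j_1}^2X_{i_2,j_2}$ with $(i_1,j_1)\neq(i_2,j_2)$; and $X_{i_1,j_1}X_{i_2,j_2}X_{i_3,j_3}$ with the three pairs pairwise distinct. I would treat the three shapes in turn.

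For a fixed $M$ I would write $[M]\mathtt{f}(\pi)$ as a sum of three pieces. The piece from $\mathtt{f_3}(\pi)$ is read off directly from Lemma 4.5. The piece from $(\Lambda+\Delta)\mathtt{f_2}(\pi)$ is obtained by letting, in turn, each linear factor of $M$ be supplied by $\Lambda+\Delta$ and the complementary quadratic factor by $\mathtt{f_2}$, using $[X_{i,j}](\Lambda+\Delta)=2(n-2)!+(n-4)(n-3)!\chi_{i,j}$ from Lemma 4.6(a) together with the $\mathtt{f_2}$-coefficients of Lemma 4.3, and summing over these splittings. The piece from $\Lambda\Delta\,\mathtt{inv_X}(\pi)$ is obtained by letting $\mathtt{inv_X}(\pi)$ supply one linear factor, whose coefficient $[X_{i,j}]\mathtt{inv_X}(\pi)$ equals $1$ if $\pi(i)>\pi(j)$ and $0$ otherwise, and $\Lambda\Delta$ the complementary quadratic factor, whose coefficient is given by Lemma 4.6(b). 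The whole computation is organized by the sign pattern of $\pi$ on the pairs occurring in $M$, that is, by which of the $(i_a,j_a)$ satisfy $\pi(i_a)>\pi(j_a)$: this pattern selects the matching subcase in each of Lemmas 4.3 and 4.5 and decides on which pairs $\Lambda\Delta\,\mathtt{inv_X}(\pi)$ is actually nonzero.

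The heart of the matter is that all $\pi$-dependent terms cancel. Within a fixed sign pattern the $\pi$-dependence enters only through factors $\chi_{\pi(i_a),\pi(j_a)}$ on non-inversion pairs and $\chi_{\pi(j_a),\pi(i_a)}$ on inversion pairs. One checks that the cubic $\chi$-terms with weight $(n-4)(n-3)!^2$ produced by $\mathtt{f_3}$ are cancelled exactly by those produced by $(\Lambda+\Delta)\mathtt{f_2}$, each such term arising from two of the splittings so that the factors of $2$ match, and that the quadratic $\chi$-terms with weight $(n-3)!(n-2)!$ cancel between the same two summands. What remains is a $\pi$-free constant, a priori different in each sign pattern. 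The role of $\Lambda\Delta\,\mathtt{inv_X}(\pi)$ is precisely to reconcile these constants: its contribution, nonzero exactly on the inversion pairs and governed by $[X_{i_1,j_1}X_{i_2,j_2}](\Lambda\Delta)$ in Lemma 4.6(b), equals the discrepancy between the constant of the given pattern and that of the all-non-inversion pattern. Since $\iota$ realizes the all-non-inversion pattern, the common value is $[M]\mathtt{f}(\iota)$, whence $\mathtt{f}(\pi)=\mathtt{f}(\iota)$.

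The main obstacle is not conceptual but the volume and accuracy of this bookkeeping: for the fully distinct monomials there are $2^3$ sign patterns, reduced by the symmetry permuting the three pairs to the four cases of Lemma 4.5(c), for the squared monomials the four patterns of Lemma 4.5(b), and in each one must pair up the correct subcases of Lemmas 4.3 and 4.5, track the multiplicities of the splittings, and handle the Kronecker term $\delta_{(i_1,j_1),(i_2,j_2)}$ of Lemma 4.6(b). The structural reason the cancellation is forced is that, after the change of summation variables turning the three defining sums into convolutions on $\mathcal{S}_n$, the polynomials $\mathtt{inv_X}(\pi)$, $\mathtt{f_2}(\pi)$ and $\mathtt{f_3}(\pi)$ are the entries in position $(\pi,\iota)$ of $\mathfrak{I_n}$, $\mathfrak{I_n}^2$ and $\mathfrak{I_n}^3$ respectively; hence $\mathtt{f}(\pi)$ is the $(\pi,\iota)$ entry of $\mathfrak{I_n}\big(\mathfrak{I_n}+\Lambda\,\mathrm{I}_{n!}\big)\big(\mathfrak{I_n}+\Delta\,\mathrm{I}_{n!}\big)$, with $-\Lambda$ and $-\Delta$ the two lower nonzero eigenvalues predicted in Theorem 2, and the lemma says precisely that this column is constant, which is exactly what the ensuing minimal-polynomial argument requires.
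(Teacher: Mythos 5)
Your proposal is correct and follows essentially the same route as the paper: the paper's proof of this lemma is exactly the coefficient-by-coefficient verification you describe, extracting $[M]\mathtt{f}(\pi)$ for the three monomial shapes $X_{i_1,j_1}^3$, $X_{i_1,j_1}^2X_{i_3,j_3}$ and $X_{i_1,j_1}X_{i_2,j_2}X_{i_3,j_3}$ from Lemmas 4.3, 4.5 and 4.6 and checking that the $\chi_{\pi(\cdot),\pi(\cdot)}$ terms cancel, leaving a $\pi$-free value. Your additional remarks on the role of $\Lambda\Delta\,\mathtt{inv_X}(\pi)$ in reconciling the sign-pattern constants and on the convolution interpretation of $\mathtt{f}$ are accurate and, if anything, make the cancellation mechanism more transparent than the paper's terse presentation.
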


\begin{proof} Using Lemma 4.3, Lemma 4.5 and Lemma 4.6, we prove that:
\begin{itemize}
\item for \begin{math}(i_1,j_1) = (i_2,j_2) = (i_3,j_3)\end{math}:
\begin{displaymath}[X_{i_1, j_1}^3]\mathtt{f}(\iota) = [X_{i_1, j_1}^3]\mathtt{f}(\pi)\end{displaymath}
\begin{displaymath}= \frac{n!^2}{8} + (n-2)!^2\frac{n^2-n+1}{2} + (n-4)(n-3)!(n-2)!\frac{n^2-n+2}{4}\chi_{i_1,j_1} - (n-3)!(n-2)!\chi_{i_1,j_1}^2,\end{displaymath}
\item for \begin{math}(i_1,j_1) = (i_2,j_2) \neq (i_3,j_3)\end{math}:
\begin{displaymath}[X_{i_1, j_1}^2X_{i_3, j_3}]\mathtt{f}(\iota) = [X_{i_1, j_1}^2X_{i_3, j_3}]\mathtt{f}(\pi)  \end{displaymath}
\begin{displaymath}= \frac{3n!^2}{8}+ \frac{3(n-2)!n!}{2} + \frac{3(n-2)!^2}{2}\end{displaymath} \begin{displaymath}+ (n-4)(n-3)!(n-2)!\frac{n^2-n+2}{4}\big(2\chi_{i_1,j_1} + \chi_{i_3,j_3}\big)\end{displaymath} \begin{displaymath}- (n-3)!(n-2)!\big(\chi_{i_1,j_1}^2 + 2\chi_{i_1,j_1}\chi_{i_3,j_3}\big),\end{displaymath}
\item for \begin{math}(i_1,j_1) \neq (i_2,j_2) \neq (i_3,j_3) \neq (i_1,j_1)\end{math}:
\begin{displaymath}[ X_{i_1, j_1}X_{i_2, j_2}X_{i_3, j_3}]\mathtt{f}(\iota) = [ X_{i_1, j_1}X_{i_2, j_2}X_{i_3, j_3}]\mathtt{f}(\pi)  \end{displaymath}
\begin{displaymath}= \frac{3n!^2}{4} + 3(n-2)!n! + 3(n-2)!^2\end{displaymath}
\begin{displaymath}+ (n-4)(n-3)!(n-2)!\frac{n^2-n+2}{2}\big(\chi_{i_1,j_1} + \chi_{i_2,j_2} + \chi_{i_3,j_3}\big)\end{displaymath}
\begin{displaymath}- 2(n-3)!(n-2)!\big(\chi_{i_1,j_1}\chi_{i_2,j_2} + \chi_{i_1,j_1}\chi_{i_3,j_3} + \chi_{i_2,j_2}\chi_{i_3,j_3}\big).\end{displaymath}
\end{itemize}
\end{proof}

\noindent We are now in position to determine the minimal polynomial of \begin{math}\mathfrak{I_n}\end{math}.

\noindent Let \begin{math}n \geq 4\end{math}. We write
\begin{displaymath}\Omega = \frac{n!}{2}\sum_{\{(i,j) \in [n]^2\ |\ i<j\}}X_{i,j}.\end{displaymath}

\newtheorem{minpoly}[XMNFijI]{Proposition}
\begin{minpoly}
Let \begin{math}n \geq 4\end{math}. The minimal polynomial of \begin{math}\mathfrak{I_n}\end{math} is:
\begin{displaymath}X \big( X + \Lambda \big) \big( X + \Delta \big) \big( X - \Omega \big).\end{displaymath}
\end{minpoly}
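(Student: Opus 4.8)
The plan is to interpret $\mathfrak{I_n}$ as the matrix of left multiplication by the group-algebra element $\mathsf{I}:=\sum_{\sigma\in\mathcal{S}_n}\mathtt{inv_X}(\sigma)\sigma$, working over the fraction field $\mathbb{R}(X_{1,2},\dots,X_{n-1,n})$ so that the minimal polynomial is well defined. For any polynomial $p$ one has $p(\mathfrak{I_n})=0$ if and only if $p(\mathsf{I})=0$, and the $(\pi,\tau)$-entry of $p(\mathfrak{I_n})$ is the coefficient of $\pi\tau^{-1}$ in $p(\mathsf{I})$; in particular every diagonal entry equals the coefficient of $\iota$. The first step is the bookkeeping identification: reindexing the defining sums of $\mathtt{f_2}$ and $\mathtt{f_3}$ (via $a=\sigma^{-1}$, $b=\sigma\tau^{-1}$, $c=\tau\pi$, so that $abc=\pi$) shows that $\mathtt{f_2}(\pi)$ and $\mathtt{f_3}(\pi)$ are precisely the coefficients of $\pi$ in $\mathsf{I}^2$ and $\mathsf{I}^3$. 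Consequently $\mathtt{f}(\pi)=\Lambda\Delta\,\mathtt{inv_X}(\pi)+(\Lambda+\Delta)\mathtt{f_2}(\pi)+\mathtt{f_3}(\pi)$ is exactly the coefficient of $\pi$ in $\mathsf{I}(\mathsf{I}+\Lambda)(\mathsf{I}+\Delta)$, since $\Lambda,\Delta$ are scalars of the coefficient ring.

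For the divisibility half I would invoke Lemma 4.7, which states $\mathtt{f}(\pi)=\mathtt{f}(\iota)$ for all $\pi$; this says precisely that $\mathsf{I}(\mathsf{I}+\Lambda)(\mathsf{I}+\Delta)=\mathtt{f}(\iota)\,\mathsf{J}$, where $\mathsf{J}:=\sum_{\pi\in\mathcal{S}_n}\pi$. From Lemma 2.4 the row sums of $\mathfrak{I_n}$ are all equal to $\Omega$, which in the group algebra reads $\mathsf{I}\mathsf{J}=\Omega\,\mathsf{J}$, i.e. $(\mathsf{I}-\Omega)\mathsf{J}=0$. Multiplying on the right gives $\mathsf{I}(\mathsf{I}+\Lambda)(\mathsf{I}+\Delta)(\mathsf{I}-\Omega)=\mathtt{f}(\iota)\,\mathsf{J}(\mathsf{I}-\Omega)=0$, so the minimal polynomial of $\mathfrak{I_n}$ divides $X(X+\Lambda)(X+\Delta)(X-\Omega)$. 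Its four roots $0,-\Lambda,-\Delta,\Omega$ are pairwise distinct polynomials: $\Omega$ has all positive coefficients, while comparing the $X_{1,3}$-coefficients ($2(n-2)!$ for $\Lambda$ versus $(n-3)!(n-4)$ for $\Delta$) shows $\Lambda\neq\Delta$ for $n\geq 4$.

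For minimality it then suffices, exactly as in Proposition 3.3 for $\mathfrak{D_n}$, to show that deleting any one factor leaves a nonzero product. Deleting $(X-\Omega)$ leaves $\mathsf{I}(\mathsf{I}+\Lambda)(\mathsf{I}+\Delta)=\mathtt{f}(\iota)\,\mathsf{J}$, which is nonzero because the explicit (manifestly positive) values of $\mathtt{f}(\iota)$ recorded in Lemma 4.7 give $\mathtt{f}(\iota)\neq 0$. For the remaining three deletions I would compute the common diagonal entry, i.e. the coefficient of $\iota$; using $\mathtt{inv_X}(\iota)=0$ these are $\mathtt{f_3}(\iota)+(\Lambda-\Omega)\mathtt{f_2}(\iota)$, $\mathtt{f_3}(\iota)+(\Delta-\Omega)\mathtt{f_2}(\iota)$, and $\mathtt{f_3}(\iota)+(\Lambda+\Delta-\Omega)\mathtt{f_2}(\iota)-\Omega\Lambda\Delta$, each homogeneous of degree $3$ in the $X_{i,j}$. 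To see that each is nonzero it is enough to exhibit one nonvanishing monomial, and extracting $[X_{1,2}^3]$ is the convenient choice because $\chi_{1,2}=0$ collapses every $\chi$-term in Lemmas 4.3 and 4.5, leaving $[X_{1,2}^3]\mathtt{f_3}(\iota)=\frac{n!^2}{8}-\frac{(n-2)!^2}{2}$ and $[X_{1,2}^2]\mathtt{f_2}(\iota)=(n-2)!\frac{n^2-n+2}{4}$, together with the elementary coefficients $[X_{1,2}]\Lambda=(n-2)!$, $[X_{1,2}]\Delta=(n-3)!(n-2)$, $[X_{1,2}]\Omega=\frac{n!}{2}$. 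Substituting reduces each case to a single factorial expression whose nonvanishing for every $n\geq 4$ is then checked directly.

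The conceptual content is short: the whole divisibility step is essentially the one identity $\mathsf{I}(\mathsf{I}+\Lambda)(\mathsf{I}+\Delta)=\mathtt{f}(\iota)\mathsf{J}$ coming from Lemma 4.7, combined with $(\mathsf{I}-\Omega)\mathsf{J}=0$. I expect the genuine obstacle to lie entirely in the minimality step, namely in certifying that none of the three degree-$3$ diagonal polynomials above vanishes identically. The $[X_{1,2}^3]$ reduction turns this into a finite, routine computation, but one must guard against an accidental cancellation at small $n$; should $[X_{1,2}^3]$ ever vanish, the fallback is to test another monomial such as $[X_{1,2}^2X_{1,3}]$ (where the $\chi$-terms survive and no longer conspire), or to pass to an off-diagonal entry, the coefficient of some $\pi\neq\iota$, where the genuine $\pi$-dependence of $\mathtt{inv_X}$, $\mathtt{f_2}$ and $\mathtt{f_3}$ forces the entry to be a nonconstant, hence nonzero, matrix.
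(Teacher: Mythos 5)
Your architecture is the same as the paper's: the identity $\sum_{\sigma}\mathtt{f}(\sigma)\sigma=\mathtt{f}(\iota)\sum_{\sigma}\sigma$ from Lemma 4.7 together with the row-sum identity of Lemma 2.4 gives the divisibility half exactly as in Proposition 4.8, and your minimality strategy (show that no three-factor product annihilates $\mathfrak{I_n}$ by exhibiting one nonzero monomial in the common diagonal entry) is also the paper's. The divisibility half of your argument is correct as written.

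The gap is in the certificate you commit to for minimality. Extracting $[X_{1,2}^3]$ fails for two of the three deletions, and not merely at small $n$ but identically. Since $\chi_{1,2}=0$, Lemmas 4.3 and 4.5 give $[X_{1,2}^2]\mathtt{f_2}(\iota)=(n-2)!\tfrac{n^2-n+2}{4}$ and $[X_{1,2}^3]\mathtt{f_3}(\iota)=\tfrac{n!^2}{8}-\tfrac{(n-2)!^2}{2}$, while $[X_{1,2}]\Lambda=[X_{1,2}]\Delta=(n-2)!$ (note $(n-3)!\,(n-2)=(n-2)!$, so $\Lambda$ and $\Delta$ are indistinguishable on this coefficient). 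Writing $M=(n-2)!$ and $u=n^2-n$, so $n!=uM$, the diagonal entry of the product with $(X+\Delta)$ deleted has
\begin{displaymath}
[X_{1,2}^3]\Big(\mathtt{f_3}(\iota)+(\Lambda-\Omega)\mathtt{f_2}(\iota)\Big)
=\frac{u^2M^2}{8}-\frac{M^2}{2}+\Big(M-\frac{uM}{2}\Big)M\,\frac{u+2}{4}
=M^2\Big(\frac{u^2}{8}-\frac{1}{2}+\frac{4-u^2}{8}\Big)=0,
\end{displaymath}
and the case with $(X+\Lambda)$ deleted is the very same expression, hence also $0$. Only your third case survives (the extra term $-M^2\,n!/2$ prevents the cancellation), and the deletion of $(X-\Omega)$ is fine since $\mathtt{f}(\iota)\neq 0$. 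So for the factors $X+\Lambda$ and $X+\Delta$ the ``single factorial expression whose nonvanishing \dots is then checked directly'' is actually the zero polynomial, and the fallback you only sketch is genuinely required: one must choose a monomial $X_{i,j}^3$ with $\chi_{i,j}\neq 0$ so that the $\chi$-terms of Lemmas 4.3 and 4.5 survive and separate $\Lambda$ from $\Delta$. This is precisely what the paper does, using $[X_{1,4}^3]$ to rule out $X(X+\Lambda)(X-\Omega)$ and $[X_{1,3}^3]$ to rule out $X(X+\Delta)(X-\Omega)$ and $(X+\Lambda)(X+\Delta)(X-\Omega)$. Until one of those computations is carried out, the necessity of the factors $X+\Lambda$ and $X+\Delta$ is not established.
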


\begin{proof} Using Lemma 2.4 and Lemma 4.7, we get:
\begin{displaymath}\big( \sum_{\sigma \in \mathcal{S}_n} \mathtt{inv_X}(\sigma)\sigma + \Lambda \iota \big) \big( \sum_{\sigma \in \mathcal{S}_n} \mathtt{inv_X}(\sigma)\sigma + \Delta \iota \big) \sum_{\sigma \in \mathcal{S}_n} \mathtt{inv_X}(\sigma)\sigma\end{displaymath}
\begin{displaymath}= \sum_{\sigma \in \mathcal{S}_n} \big( \sum_{\begin{subarray}{l} \sigma_1, \sigma_2, \sigma_3 \in \mathcal{S}_n \\ \sigma_1 \sigma_2 \sigma_3 = \sigma \end{subarray}} \mathtt{inv_X}(\sigma_1)\mathtt{inv_X}(\sigma_2)\mathtt{inv_X}(\sigma_3) \big)\sigma + \sum_{\sigma \in \mathcal{S}_n} \big( (\Lambda + \Delta)\sum_{\begin{subarray}{l} \sigma_1, \sigma_2 \in \mathcal{S}_n \\ \sigma_1 \sigma_2 = \sigma \end{subarray}} \mathtt{inv_X}(\sigma_1)\mathtt{inv_X}(\sigma_2) \big)\sigma + \sum_{\sigma \in \mathcal{S}_n \backslash \{\iota\}} \Lambda \Delta \mathtt{inv_X}(\sigma) \sigma\end{displaymath}
\begin{displaymath}= \sum_{\sigma \in \mathcal{S}_n} \big( \sum_{\sigma_1, \sigma_2 \in \mathcal{S}_n} \mathtt{inv_X}(\sigma_1^{-1})\mathtt{inv_X}(\sigma_1 \sigma_2^{-1})\mathtt{inv_X}(\sigma_2 \sigma) \big)\sigma + \sum_{\sigma \in \mathcal{S}_n} \big( (\Lambda + \Delta)\sum_{\sigma_1 \in \mathcal{S}_n} \mathtt{inv_X}(\sigma_1^{-1})\mathtt{inv_X}(\sigma_1 \sigma) \big)\sigma + \sum_{\sigma \in \mathcal{S}_n \backslash \{\iota\}} \Lambda \Delta \mathtt{inv_X}(\sigma) \sigma\end{displaymath}
\begin{displaymath}= \sum_{\sigma \in \mathcal{S}_n} \mathtt{f}(\sigma) \sigma = \mathtt{f}(\iota) \sum_{\sigma \in \mathcal{S}_n}\sigma.\end{displaymath}
Then:
\begin{displaymath}\big( \sum_{\sigma \in \mathcal{S}_n} \mathtt{inv_X}(\sigma)\sigma + \Lambda \iota \big) \big( \sum_{\sigma \in \mathcal{S}_n} \mathtt{inv_X}(\sigma)\sigma + \Delta \iota \big) \sum_{\sigma \in \mathcal{S}_n} \mathtt{inv_X}(\sigma)\sigma \big( \sum_{\sigma \in \mathcal{S}_n} \mathtt{inv_X}(\sigma)\sigma - \Omega \iota \big)\end{displaymath}
\begin{displaymath}= \mathtt{f}(\iota) \sum_{\sigma \in \mathcal{S}_n}\sigma \big( \sum_{\sigma \in \mathcal{S}_n} \mathtt{inv_X}(\sigma)\sigma - \Omega \iota \big) = 0. \end{displaymath}
Hence the minimal polynomial of \begin{math}\mathfrak{I_n}\end{math} divides \begin{math}X \big( X + \Lambda \big) \big( X + \Delta \big) \big( X - \Omega \big)\end{math}.\\
It is clear that the minimal polynomial of \begin{math}\mathfrak{I_n}\end{math} does not divide \begin{math}X \big( X + \Lambda \big) \big( X + \Delta \big)\end{math}.\\
We have:
\begin{displaymath}[ X_{1, 4}^3]\bigg( [\iota]\Big( \big( \sum_{\sigma \in \mathcal{S}_n} \mathtt{inv_X}(\sigma)\sigma + \Lambda \iota \big) \big( \sum_{\sigma \in \mathcal{S}_n} \mathtt{inv_X}(\sigma)\sigma - \Omega \iota \big) \sum_{\sigma \in \mathcal{S}_n} \mathtt{inv_X}(\sigma)\sigma  \Big) \bigg)\end{displaymath}
\begin{displaymath}= (n-3)!^2\frac{n^4 - 8n^3 + 22n^2 - 36n + 44}{2} \neq 0.\end{displaymath}
Then the  minimal polynomial of \begin{math}\mathfrak{I_n}\end{math} does not divide \begin{math}X \big( X + \Lambda \big) \big( X - \Omega \big)\end{math}.\\
We have:
\begin{displaymath}[X_{1, 3}^3]\bigg( [\iota]\Big( \big( \sum_{\sigma \in \mathcal{S}_n} \mathtt{inv_X}(\sigma)\sigma + \Delta \iota \big) \big( \sum_{\sigma \in \mathcal{S}_n} \mathtt{inv_X}(\sigma)\sigma - \Omega \iota \big) \sum_{\sigma \in \mathcal{S}_n} \mathtt{inv_X}(\sigma)\sigma  \Big) \bigg)\end{displaymath}
\begin{displaymath}= -4(n-3)!(n-2)! - (n-3)!n! \neq 0.\end{displaymath}
Then the  minimal polynomial of \begin{math}\mathfrak{I_n}\end{math} does not divide \begin{math}X \big( X + \Delta \big) \big( X - \Omega \big)\end{math}.\\
We have:
\begin{displaymath}[X_{1, 3}^3]\bigg( [\iota]\Big( \big( \sum_{\sigma \in \mathcal{S}_n} \mathtt{inv_X}(\sigma)\sigma + \Lambda \iota \big) \big( \sum_{\sigma \in \mathcal{S}_n} \mathtt{inv_X}(\sigma)\sigma + \Delta \iota \big) \big( \sum_{\sigma \in \mathcal{S}_n} \mathtt{inv_X}(\sigma)\sigma - \Omega \iota \big)  \Big)  \bigg)\end{displaymath}
\begin{displaymath}= \frac{(n-2)!^2}{2} + \frac{(n-2)!n!}{2} - \frac{(n-2)!^2 n!}{2} \neq 0.\end{displaymath}
Then the  minimal polynomial of \begin{math}\mathfrak{I_n}\end{math} does not divide \begin{math}\big( X + \Lambda \big) \big( X + \Delta \big) \big( X - \Omega \big)\end{math}. \end{proof}

\section{Proofs of the Theorems}
\label{sec:proofs}

\noindent We prove first Theorem 1 and then Theorem 2.

\begin{proof} From Proposition 3.3, we deduce that \begin{math}\mathfrak{D_n}\end{math} is diagonalizable and: \begin{displaymath}Sp(\mathfrak{D_n})=\{\frac{n!}{2}\mathfrak{d_n},\ -(n-2)!\mathfrak{d_n},\ 0\}.\end{displaymath}
From the diagonalisability of \begin{math}\mathfrak{D_n}\end{math} and the result \eqref{Dn} of Section 2, we get \begin{displaymath}V_{\mathfrak{D_n}}\big(\frac{n!}{2}\mathfrak{d_n}\big)=1\end{displaymath}.\\
The trace of \begin{math}\mathfrak{D_n}\end{math} is \begin{math}0\end{math}. Then:
\begin{displaymath}\frac{n!}{2}\mathfrak{d_n}\, V_{\mathfrak{D_n}}\big(\frac{n!}{2}\mathfrak{d_n}\big)- (n-2)!\mathfrak{d_n}\, V_{\mathfrak{D_n}}\big(-(n-2)!\mathfrak{d_n}\big)+ 0\,V_{\mathfrak{D_n}}(0)= 0\end{displaymath}
\begin{displaymath}V_{\mathfrak{D_n}}\big(-(n-2)!\mathfrak{d_n}\big)= \binom{n}{2}.\end{displaymath}
The dimension of \begin{math}\mathfrak{D_n}\end{math} is \begin{math}n!\end{math}. Then:
\begin{displaymath}V_{\mathfrak{D_n}}\big(\frac{n!}{2}\mathfrak{d_n}\big)+ V_{\mathfrak{D_n}}\big(-(n-2)!\mathfrak{d_n}\big)+ V_{\mathfrak{D_n}}(0)= n!\end{displaymath}
\begin{displaymath}V_{\mathfrak{D_n}}(0)= n! -\binom{n}{2} -1.\end{displaymath}
\end{proof}

\begin{proof} From Proposition 4.8, we deduce that \begin{math}\mathfrak{I_n}\end{math} is diagonalizable and: \begin{displaymath}Sp(\mathfrak{I_n})=\{\Omega, -\Lambda, -\Delta, 0\}.\end{displaymath}
From the diagonalisability of \begin{math}\mathfrak{I_n}\end{math} and the result \eqref{In} of Section 2, we get \begin{displaymath}V_{\mathfrak{I_n}}(\Omega)=1\end{displaymath}.\\
The trace of \begin{math}\mathfrak{I_n}\end{math} is \begin{math}0\end{math}. Then:
\begin{itemize}
\item \begin{displaymath}V_{\mathfrak{I_n}}(\Omega) \big(\Omega \,,\, X_{1,2}\big) + V_{\mathfrak{I_n}}(\Lambda) \big(\Lambda \,,\, X_{1,2}\big) + V_{\mathfrak{I_n}}(\Delta) \big(\Delta \,,\, X_{1,2}\big) + V_{\mathfrak{I_n}}(0) \big(0 \,,\, X_{1,2}\big) = 0,\end{displaymath}
\begin{displaymath}V_{\mathfrak{I_n}}(\Lambda) + V_{\mathfrak{I_n}}(\Delta) = \binom{n}{2}.\end{displaymath}
\item \begin{displaymath}V_{\mathfrak{I_n}}(\Omega) \big(\Omega \,,\, X_{1,n}\big) + V_{\mathfrak{I_n}}(\Lambda) \big(\Lambda \,,\, X_{1,n}\big) + V_{\mathfrak{I_n}}(\Delta) \big(\Delta \,,\, X_{1,n}\big) + V_{\mathfrak{I_n}}(0) \big(0 \,,\, X_{1,n}\big) = 0,\end{displaymath}
\begin{displaymath}(n-1)V_{\mathfrak{I_n}}(\Lambda) - V_{\mathfrak{I_n}}(\Delta) = \binom{n}{2}.\end{displaymath}
\end{itemize}
So we deduce:
\begin{displaymath}V_{\mathfrak{I_n}}(\Lambda)=n-1\ \text{and}\ V_{\mathfrak{I_n}}(\Delta) = \binom{n-1}{2}.\end{displaymath}
The dimension of \begin{math}\mathfrak{I_n}\end{math} is \begin{math}n!\end{math}. Then:
\begin{displaymath}V_{\mathfrak{I_n}}(\Omega) + V_{\mathfrak{I_n}}(\Lambda) + V_{\mathfrak{I_n}}(\Delta) + V_{\mathfrak{I_n}}(0) = n!,\end{displaymath}
\begin{displaymath}V_{\mathfrak{I_n}}(0) = n! - \binom{n}{2} - 1.\end{displaymath}
\end{proof}

\bibliographystyle{abbrvnat}

\label{sec:biblio}

\end{document}